\theoremstyle{plain}
\newtheorem{teor}{Theorem}[section]
\newtheorem{prop}[teor]{Proposition}
\newtheorem{remark}{Remark}[section]
\newtheorem*{str}{Straightening Theorem}
\theoremstyle{definition}
\newtheorem{defi}[teor]{Definition}
\def\C{\mathbb{C}}
\def\S{\mathbb{S}}
\def\D{\mathbb{D}}
\def\H{\mathbb{H}}
\def\U{\mathbb{U}}
\def\O{\mathbb{O}}
\def\l0{_{\lambda_0}}
\def\i0{_{\iota_0}}
\def\ll{_{\lambda}}
\def\wl{\widehat \lambda}
\def\wwl{_{\widehat\lambda}}
\title{Parameter space for families of parabolic-like~mappings}
\author{Luna Lomonaco}
\begin{document}

\maketitle

\begin{abstract}
In this paper we
study families of degree $2$ parabolic-like mappings $(f\ll)_{\lambda \in \Lambda}$ (as we defined in
\cite{LL}). We prove
that the hybrid conjugacies between a nice analytic family of degree $2$ parabolic-like mappings and members of the family $Per_1(1)$
induce a
continuous map $\chi: \Lambda \rightarrow \C$, which under suitable conditions
restricts to a ramified covering from the connectedness locus of $(f\ll)_{\lambda \in \Lambda}$
to the connectedness locus $M_1\setminus \{1\}$ of $Per_1(1)$. 
As an application, we prove that the connectedness locus of the family $C_a(z)=z+az^2+z^3,\,\,a\in \C$ presents baby $M_1$.
\end{abstract}
\section{Introduction}\label{intro}
 A degree $d$ polynomial-like mapping is a degree $d$ proper holomorphic map $f: U' \rightarrow U$, where $U'$ and
 $ U$ are topological disks and $U'$ is compactly contained in $U$. This definition captures the behaviour of a polynomial in a neighbourhood of its
filled Julia set. The filled Julia set is defined in the polynomial-like case as
the set of points which do not escape the domain. The external class of a polynomial-like map is the (conjugacy classes of) the map which
encodes the dynamics of the polynomial-like map outside the filled Julia set. %A polynomial-like map of degree $d$ is determined up
%to holomorphic conjugacy by its internal and external classes, that is, the (conjugacy classes of) the restrictions to
%the filled Julia set and its complement. In particular 
 The external class of a degree $d$ polynomial-like map is a degree $d$ real-analytic orientation
preserving and strictly expanding self-covering of the unit circle: the expansivity of such a
circle map implies that all the periodic points are repelling, and in particular
not parabolic.

In order to avoid this restriction, in \cite{LL} we introduce an object, which we call
\textit{parabolic-like mapping},
 to describe the parabolic case. A parabolic-like mapping is thus similar to a polynomial-like
mapping, but with a parabolic external class; that is to say, the external map has a
parabolic fixed point.  A parabolic-like map can be seen as the union of two different dynamical
parts: a polynomial-like part and a parabolic one, which are
connected by a dividing arc.
\begin{defi}\label{definitionparlikemap} \textbf{(Parabolic-like maps)\,\,\,}
A parabolic-like map of degree $d$ is a 4-tuple ($f,U',U,\gamma$) where 
\begin{itemize}
	\item $U'$ and $U$ are open subsets of $\C$, with $U',\,\, U$ and $U \cup U'$ isomorphic to a disc, and $U'$ not
contained in $U$,
	\item $f:U' \rightarrow U$ is a proper holomorphic map of degree $d$ with a parabolic fixed point at $z=z_0$ of
 multiplier 1,
	\item $\gamma:[-1,1] \rightarrow \overline {U}$ is an arc with $\gamma(0)=z_0$, forward invariant under $f$, $C^1$
on $[-1,0]$ and on $[0,1]$, and such
that
$$f(\gamma(t))=\gamma(dt),\,\,\, \forall -\frac{1}{d} \leq t \leq \frac{1}{d},$$
$$\gamma([ \frac{1}{d}, 1)\cup (-1, -\frac{1}{d}]) \subseteq U \setminus U', \,\,\,\,\,\,\gamma(\pm 1) \in \partial U.$$
It resides in repelling petal(s) of $z_0$ and it divides $U'$ and $U$ into $\Omega', \Delta'$ and $\Omega, \Delta$
respectively, such that $\Omega' \subset \subset U$ 
(and $\Omega' \subset \Omega$), $f:\Delta' \rightarrow \Delta$ is an isomorphism and
$\Delta'$ contains at least one attracting fixed petal of $z_0$. We call the arc $\gamma$ a \textit{dividing arc}.

\end{itemize}

\end{defi}
In \cite{LL} we extend the theory of polynomial-like maps to parabolic-like maps, and we straighten degree $2$ parabolic-like maps to members
of the family of quadratic rational maps with a parabolic fixed point of multiplier $1$ at infinity and critical points at $1$ and $-1$,
which is $$Per_1(1)= \{[P_A]\,|\,P_A(z)=z+\frac{1}{z}+A\}.$$ More precisely, we prove the following:
\begin{str}
 Every degree $2$ parabolic-like mapping $(f,U',U,\gamma)$ is hybrid equivalent
to a member of the family $Per_1(1)$. Moreover, if $K_f$ is connected, this member is
unique.
\end{str}
Note that $[P_A]= \{ P_A,\,\,P_{-A}\},$ since the involution $z \rightarrow -z$ conjugates $P_{A}$ and $P_{-A}$, interchanging the roles of the critical points. 
We refer to a member of the family $Per_1(1)$ as one of the representatives of
its class.
The family $Per_1(1)$ is typically parametrized by $B=1-A^2$, which is the multiplier of the 'free' fixed point
$z=- 1/A$ of $P_A$. The connectedness
locus of $Per_1(1)$ is called $M_1$.
If $\textbf{f}=(f_{\lambda}: U_{\lambda}' \rightarrow U_{\lambda})_{\lambda \in \Lambda} $ is a
family of degree $2$ parabolic-like maps with parameter space $\Lambda \subset \C$,
calling $M_f$ the connectedness locus of $\textbf{f}$, by the uniqueness of the Straightening
we can define a map $$\chi:  M_f \rightarrow M_1$$
$$\hspace{1cm} \lambda \rightarrow B,$$
which associates to each $\lambda$ the multiplier of the fixed point $z=- 1/A$ of the
member $[P_A]$ hybrid equivalent to $f_{\lambda}$.

In this paper we will prove that if the family $\textbf{f}$ is \textit{analytic} and \textit{nice} (see Def. \ref{def} and \ref{nf}),
\textit{the map $\chi$ extends to a map defined on the whole of $\Lambda$
(see \ref{external}), whose
restriction
to
$M_f$, under suitable conditions (see Def. \ref{properfam})
is a ramified covering of $M_1\setminus \{1\}$ (see Thm. \ref{bigthm})}.
The reason why the map $\chi$ covers
$M_1\setminus \{1\}$, instead of the whole of $M_1$, resides in the definition of analytic family of parabolic-like
mappings, and it will be explained in section \ref{anfam}.
\begin{figure}
  \centering
  \begin{minipage}{.4\textwidth}
  \centering
  \includegraphics[width= 5cm]{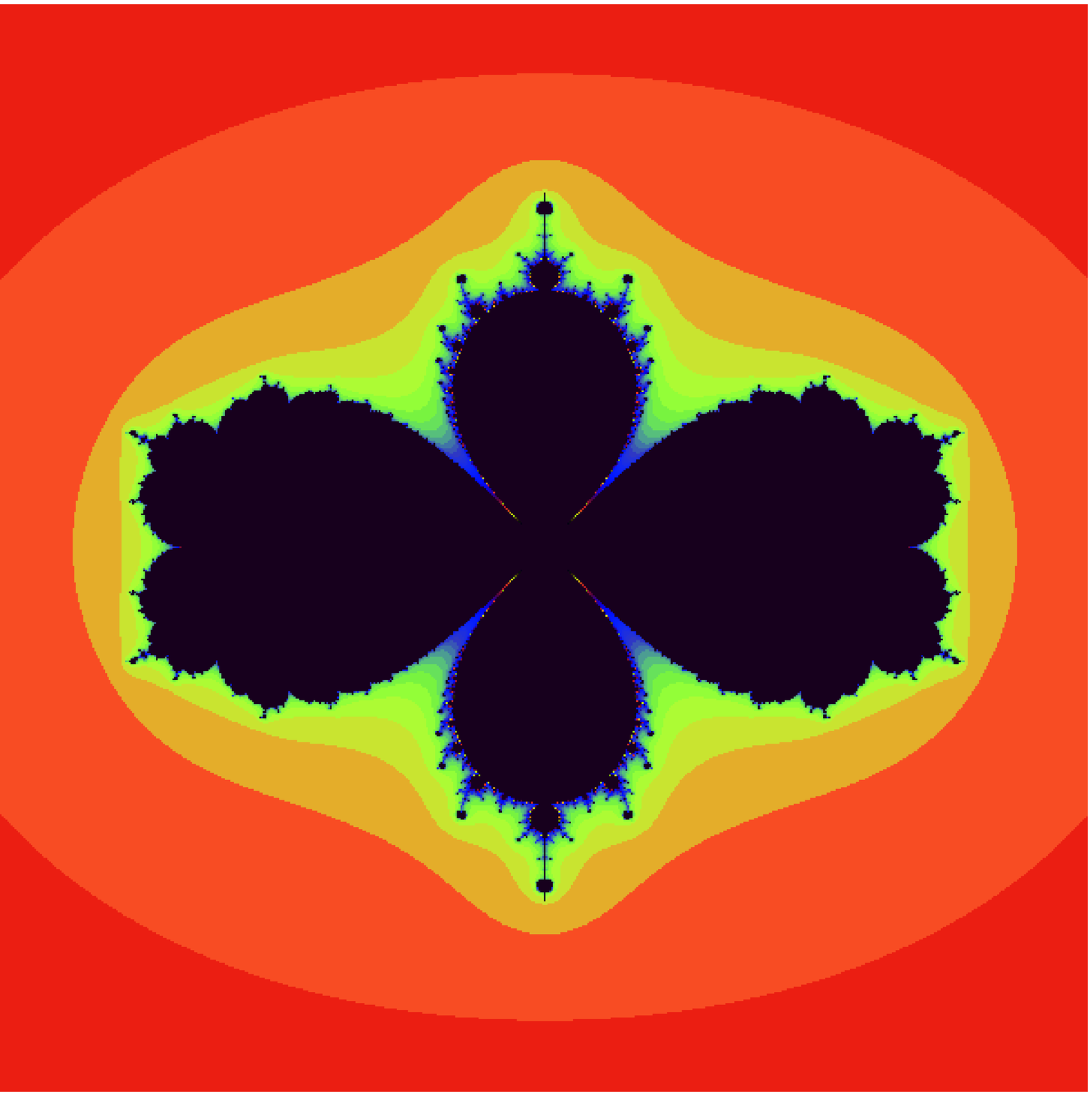}
  \captionof{figure}{\small Connectedness locus of the family $C_a$.}
  \label{M}
  \end{minipage} 
  \hspace{1.5cm}
  %&
  %\hspace{1.5cm}
  \begin{minipage}{.4\textwidth}
  \centering
  \includegraphics[width= 5cm]{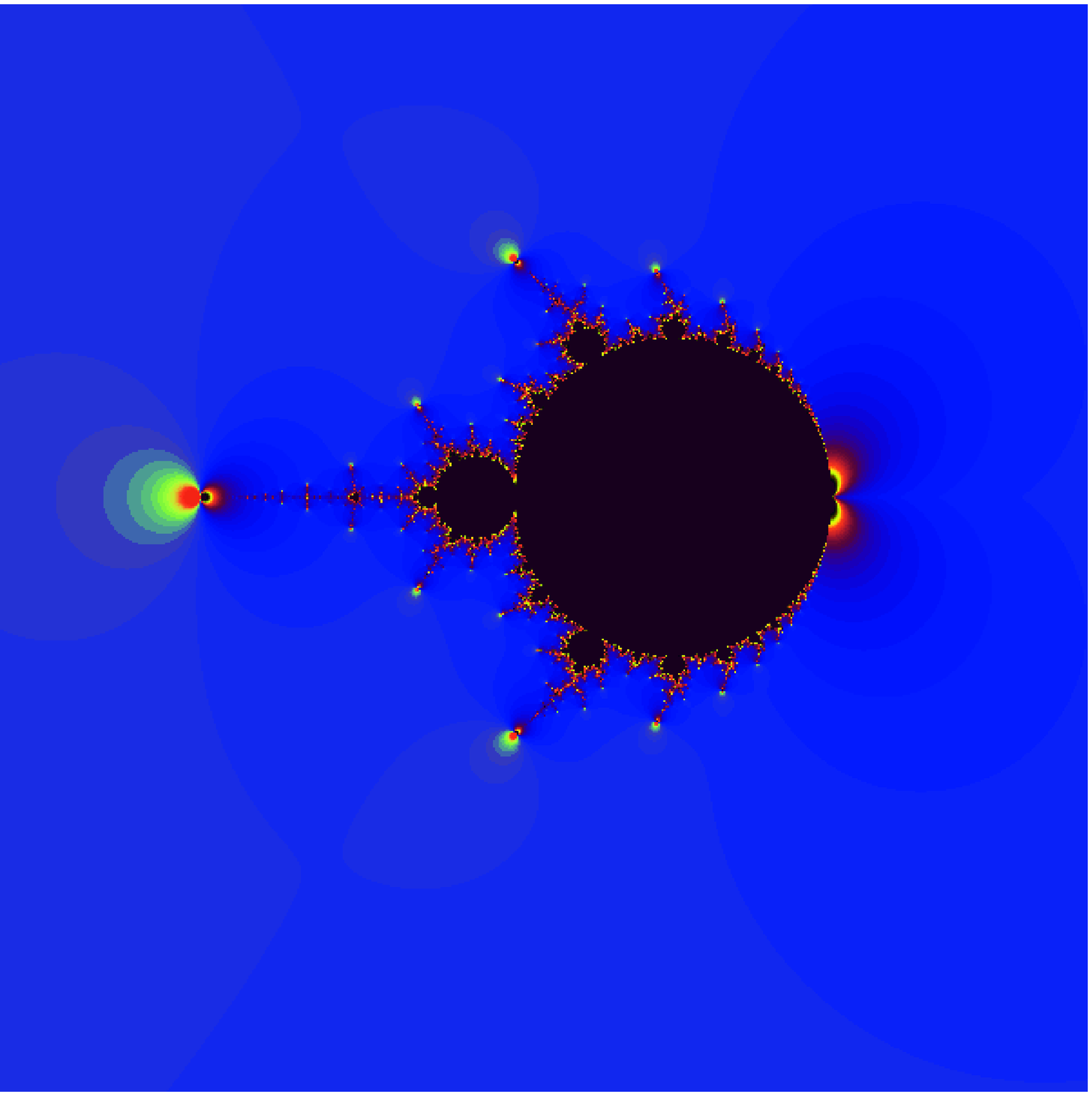}
  \captionof{figure}{\small Connectedness locus $M_1$ of the family $Per_1(1)$.}
  \label{M1}
  \end{minipage}

\end{figure}
As an application, we will show that the connectedness locus of the family $C_a(z)= z+az^2+z^3,\,\,a\in \C$ (see Fig.\ref{M}) presents
$2$ baby $M_1$ (see Fig.~\ref{M1}).\\

The results in this paper were developed during the author's Ph.d. So the
author would like to thank her former advisor, Carsten Lunde Petersen, for suggesting the idea of parabolic-like mapping and for his help,
Roskilde University and Universit\'e Paul Sabatier for their hospitality, and Roskilde
University, the ANR-08-JCJC-0002 founded by the Agence Nationale de la Recherche and the Marie Curie RTN 035651-CODY for
their financial support during her Ph.d. 

\section{Definitions and statement of the result}
In this Section we define an analytic family of parabolic-like maps and its connectedness locus, nice families of parabolic-like maps,
and we give an example
of nice analytic family of parabolic-like maps. Then we give a review of the Straightening Theorem,
an overview of this paper and we state the main result.

\begin{defi}\label{def}
 
Let $\Lambda \subset \C$, $\Lambda\approx \D$ and let
$\textbf{f}=(f_{\lambda}: U_{\lambda}' \rightarrow U_{\lambda} )_{\lambda \in \Lambda}$ be a
family of degree $d$ parabolic-like mappings. Set $\U'= \{(\lambda, z)|\,\, z \in
U_{\lambda}' \}$, $\U= \{(\lambda, z)| \,\, z \in U_{\lambda} \}$, $\O'= \{(\lambda, z)| \,\, z \in
\Omega_{\lambda}' \}$, and $\O= \{(\lambda, z)| \,\,  z \in \Omega_{\lambda} \}$.
Then  $\textbf{f}$ is a degree $d$ analytic family of parabolic-like maps if the following conditions are
satisfied: 
\begin{enumerate}
	\item $\U'$, $\U$, $\O'$ and $\O$ are domains in $\C^2$;
	\item the map $\textbf{f}: \U' \rightarrow \U$ is holomorphic in $(\lambda,z)$. 
        \item all the parabolic-like maps in the family have %sfor each $\lambda \in \Lambda$ the map $f\ll:U'\ll\rightarrow U\ll,\,\,$ is a parabolic-like map with
        the same number of attracting petals in the filled Julia set.
\end{enumerate}
 \end{defi}
% \begin{notation} As in \cite{LL}, we will use through out this paper both the notations 
% $\gamma\ll:[-1,1] \rightarrow \overline
% {U\ll}, \gamma\ll(0)=z\ll$, and 
% $\gamma_{\lambda +}:[0,1]\rightarrow \overline {U\ll}, \,\,\,\gamma_{\lambda -}:[0,-1]\rightarrow \overline
% {U\ll}, \,\,\,\gamma_{\lambda \pm}(0)=z\ll$, 
% $\gamma\ll:=\gamma_{\lambda +}\cup \gamma_{\lambda -}$. 
% \end{notation}
% \subsubsection{Remarks about the definition}
% All the maps in an analytic family of parabolic-like maps
% have the same number of attracting petals in
% its filled Julia set (see \ref{def} (4)) implies that there exists a 
% \textit{holomorphic motion of the
% dividing arcs} $$\Phi: \Lambda \times \gamma\l0 \rightarrow \C.$$
% Such a holomorphic motion can either be constructed by Fatou coordinates (see Appendix in \cite{Sh}),
% or it is a consequence of the construction of the family,
% when the dividing arcs of the members of the family are restrictions of external rays
%the only problem could be about the isotopy: the boundary could mess up a little bit.
% In the remainder of the paper, we will consider
%nice analytic families of parabolic-like mappings without further reference.\\
%The degree of an analytic family $f_{\lambda}$ is independent of $\lambda$.
For all $\lambda \in \Lambda$ let us call $z\ll$ the parabolic-fixed point of $f\ll$, and let us set 
$K_{\lambda}=K_{f_{\lambda}}$, and $J_{\lambda}=J_{f_{\lambda}}$. % and $\textbf{K}_f = \{(\lambda,z)\,\, | z \in K_{\lambda}\}$.
%The set $\textbf{K}_f$ is closed in $\overline{\Omega'}_{f}$, and since the projection from the closure of $\Omega'_f$
%in
%$\textbf{U}$ to 
%$\Lambda$ is proper, the projection of $\textbf{K}_f$ into $\Lambda$ is
%proper.
Define $$M_f=\{\lambda \,\, |\,\,K_{\lambda} \,\, \mbox{is connected} \}.$$

\subsubsection{Nice families}\label{nf}
An analytic family of parabolic-like mappings is
\textit{nice} if there exists a 
holomorphic motion of the
dividing arcs $$\Phi: \Lambda \times \gamma\l0 \rightarrow \C,$$ and there exists a holomorphic motion of the ranges
$$B: \Lambda \times \partial U\l0 \rightarrow \C$$
which is a piecewise $C^1$-diffeomorphism with no cusps in $z$ (for every fixed $\lambda$), and
$B\ll(\gamma\l0(\pm 1))=\gamma\ll(\pm 1)$.
\subsubsection{Remarks about the definition and motivations}
A nice family is basically endowed by definition with a holomorphic motion of a fundamental annulus (see Section \ref{holmot}).
We did not require analytic families to have these properties, because the concept of parabolic-like map is local.
On the other hand, since all the maps in an analytic family of parabolic-like maps have the same number of attracting petals in
its filled Julia set, it follows from the holomorphic parameter dependence of Fatou coordinates (see Appendix in \cite{Sh}),
that in many cases there is a  holomorphic motion of the dividing arcs
(however, in individual cases further detail might be required according to circumstances).
Moreover, since the concept of parabolic-like map is local, in many cases it is not difficult to construct a holomorphic motion of the ranges for 
an analytic family of parabolic-like mappings.

\subsubsection{Degree $2$ analytic families of parabolic-like maps}\label{anfam}

The definition of analytic family of parabolic-like maps is valid for any degree. However, since in this paper we are interested in proving
that, under suitable conditions, the map $\chi$ defined in the introduction is a ramified covering between $M_f$ and $M_1 \setminus \{1\}$, 
in the remainder we will restrict our attention to degree $2$ nice analytic
families of
parabolic-like maps. %Let us explain why the map $\chi(M_f)$ covers $M_1 \setminus \{1\}$ instead of the whole of $M_1$.
All the maps of an analytic family of
parabolic-like maps have the same number of
attracting petals in their filled Julia set, and each
(maximal) attracting petal requires a critical point in its boundary. Hence, if $\textbf{f}$ is a degree $2$ analytic family of
parabolic-like maps, either for each $\lambda \in \Lambda$ the map $f\ll$ has
no attracting petals in $K\ll$, or for each $\lambda \in \Lambda$ the map $f\ll$ has
exactly one attracting petal in $K\ll$. 

Consider now the family $Per_1(1)$. The $\Delta$-part of a parabolic-like mapping requires (at least) one attracting petal,
and for all the members of the family $Per_1(1)$ with $A \neq 0$ the parabolic fixed point has parabolic multiplicity $1$. So
a parabolic-like restriction of $P_A$, with $A\neq 0$ has no attracing petals in the filled Julia set. On the other hand,
$P_0= z + 1/z$ has a parabolic fixed point of parabolic multiplicity $2$ and the Julia set of $P_0$ is the
common boundary of the immediate parabolic basins, so a parabolic-like restriction of $P_0$ has exactly one attracting petal in its
filled Julia set.

So, if all the members an analytic family of degree $2$ parabolic-like mappings $\textbf{f}$ have exactly one attracting petal in the filled Julia set,
they are all hybrid conjugate to the map $P_0= z + 1/z$, and $\chi(\lambda)\equiv 1,$
(but this case is not really interesting).
On the other hand, if all the members of $\textbf{f}$ have no petals in the filled
Julia set, there is no $\lambda \in \Lambda$ such that $f\ll$ is hybrid conjugate to the map
$P_0= z + 1/z$, and finally the image of $M_f$ under the map $\chi$
is not the whole of $M_1$, but it belongs to $M_1\setminus \{1\}$. This is the case we are interested in.

\subsection{Example}
Consider the family of cubic polynomials
$$C_a(z)=z+az^2+z^3,\,\,\,a \in \C.$$
The maps belonging to this family have a parabolic fixed point at $z=0$ of multiplier $1$, and
critical points at
$c_+(a)=\frac{-a + \sqrt{ a^2 -3}}{3} $ and $c_-(a)=\frac{-a - \sqrt{ a^2 -3}}{3} $.
Call $\mathcal{C}$ the connectedness locus for this family. Let $\varphi_a$
denote the B\"ottcher coordinates for $C_a$ tangent to the identity at infinity, call $\tilde c_-(a)$ the co-critical point of $c_-(a)$ and let
$$\Phi: \C \setminus \mathcal{C} \rightarrow \C \setminus \overline{\D}$$
be the conformal rappresentation of $\C \setminus \mathcal{C}$ given by $$\Phi(a)=\varphi (\tilde c_-(a)).$$
Define
$\Lambda \subset \mathcal{C}$ as
the open set bounded by the external rays of angle $1/6$ and $2/6$ (see \cite{N}). In this section we are going to
prove that the family $(C_a(z)=z+az^2+z^3)_{a\in \Lambda}$ yields to a nice family of
parabolic-like mappings.

\subsection{For every $a \in \Lambda$, $C_a$ presents a parabolic-like restriction}\label{dynamicalconstruction}
Let us construct a parabolic-like restriction for every member of the family $(C_a)_{a \in \Lambda}$. 
Call
$\Xi_a$
the immediate basin of attraction of the parabolic fixed point $z=0$. Then
$c_+(a)$ belongs to $\Xi_a$, while $c_-(a)$ does not belong to $\Xi_a$.
Let $\phi_a : \Xi_a \rightarrow \D$ be
the Riemann map normalized by setting $\phi_a(c_+(a))=0$ and $\phi_a(z)\stackrel{z\rightarrow
0}\longrightarrow 1$, and let
$\psi_a:\D \rightarrow \Xi_a$ be its
inverse. By the Carathéodory theorem the map $\psi_a$ extends
continuously to $\S^1$. Note that $\phi_a  \circ C_a \circ
\psi_a=h_2$. Let $w_a$ be a $h_2$ periodic point in the first
quadrant, such that the hyperbolic geodesic $\widetilde{\gamma_a} \in \D$ connecting $w_a$ and $\overline{w_a}$
separates the
critical value $z=1/3$ from the parabolic fixed point $z=1$. Let $U_a$ be the Jordan domain bounded by
$\widehat{\gamma_a}=
\psi_a(\widetilde{\gamma_a})$, union the arcs up to potential level $1$ of the external rays landing at
$v_a=\psi_a(w_a)$
and
$\bar v_a=\psi_a(\overline{w_a})$, together with the arc of the level $1$ equipotential connecting this two rays around
$c_-(a)$ (see
Fig. \ref{cubicbcnnew}). Let
$U_a'$ be the connected component of $C_a^{-1}(U_a)$ containing $0$ and the dividing arcs $\gamma_{a \pm}$ be the fixed external rays landing at
the
parabolic
fixed point
$0$ and parametrized by potential. Then
($C_a,U_a',U_a,\gamma_{a} $) is a parabolic-like map of degree $2$ (see Fig.
\ref{cubicbcnnew}).

\begin{figure}[hbt!]
\centering
\psfrag{gp}{$\gamma_+$}
\psfrag{g-}{$\gamma_-$}
\psfrag{U}{$U$}
\psfrag{U'}{$U'$}
\psfrag{O}{$\Omega$}
\psfrag{O'}{$\Omega'$}
\psfrag{oa}{$\Xi_a$}
\psfrag{-a}{$c_+(a)$}
\psfrag{-a/2}{$0$}
\psfrag{a}{$c_-(a)$}
\psfrag{f-1w}{$\phi^{-1}(w)$}
\psfrag{f-1wb}{$\phi^{-1}(\overline{w})$}
\psfrag{fi}{$\phi$}
\psfrag{f-a}{$\phi(c_+(a))$}
\psfrag{1/3}{$1/3$}
\psfrag{w}{$w$}
\psfrag{wb}{$\overline{w}$}
\psfrag{fa}{$\phi(0)$}
\includegraphics[width= 14cm]{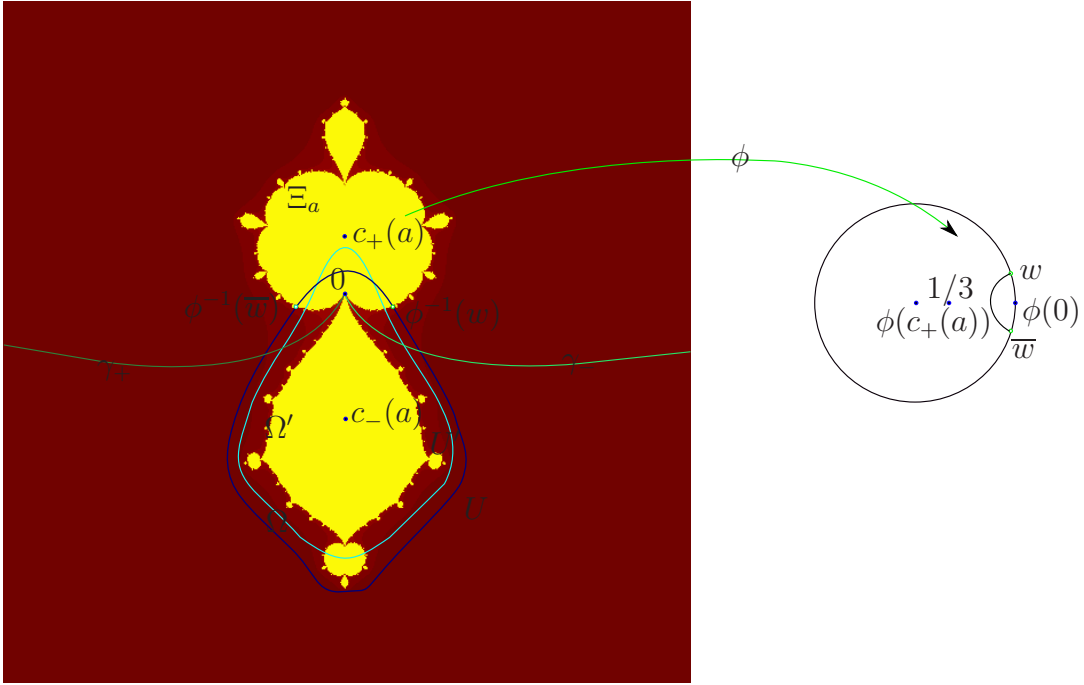}
\caption{\small Construction of a parabolic-like restriction of a member $C_a$ of the family $(C_a(z)=z+az^2+z^3)_{a \in
\Lambda}$. In the picture, $a=i$.}
\label{cubicbcnnew}
\end{figure}

\subsection{The family $(C_a(z)=z+az^2+z^3)_{a \in \Lambda}$ yields to a nice analytic family of
parabolic-like mappings}
For every $a \in \Lambda$ the parabolic fixed point $0$ of $C_a$ has parabolic multiplicity $1$, and
($C_a,U_a',U_a,\gamma_{a} $) is a parabolic-like map with no attracing petals in its filled Julia set.
By the construction we gave, it follows easily that $(C_a)_{a \in \Lambda}$ restricts to
an analytic                                 
family of parabolic-like mappings. 
Since external rays move holomorphically, to prove that this analytic family of parabolic-like maps is nice
it suffices to show that the boundaries of $U_a$ move holomorphically with the
parameter (by construction the motion defines a piecewise $C^1$-diffeomorphisms with no cusps in $z$).
Let us start by proving that the basin of attraction $\Xi_a$ of $0$ depends holomorphically on the parameter.
\subsubsection{The basin of attraction of the parabolic fixed point depends
holomorphically on $a$}
Call $\mathcal{P}_a$ the maximal attracting petal in $\Xi_a$, and let $F_a: \mathcal{P}_a \rightarrow \H_l$ be Fatou coordenates for $C_a$
normalized by sending the critical point $c_+(a)$ to $1$.
Since the family $(C_a)_{a \in \C}$ depends holomorphically on $a$, $F_a$ depends holomorphically on $a$
and the extended Fatou coordenates to the whole parabolic basin
$\mathcal{F}_a:\Xi_a \rightarrow \C$ depend holomorphically on $a$. 
On the other hand, let $\Phi_h: \D \rightarrow \C$ be extended Fatou coordinates
for
the map $h_2$, normalized by sending the
critical point to $1$.
Since the Riemann map $\phi_a$ is a holomorphic conjugacy between $C_a$ and $h_2$,
$\Phi_h \circ \phi_a $ are Fatou coordinates for $C_a$.
Since $\Phi_a(c_+(a))= 1=\Phi_h \circ \phi_a(c_+(a))= \Phi_h(0)=1$, we have that $\Phi_a=\Phi_h \circ \phi_a $. Hence
the Riemann map
$\phi_a$ depends holomorphically on $a$. So (fixing a base point $a_0 \in \Lambda$) the dynamical holomorphic motion
$\Phi_a^{-1}\circ \Phi_{a_0}: \Lambda \times \Xi_{a_0} \rightarrow \C$ (holomorphic in $z$) induces a dynamical holomorphic
motion $\phi_a^{-1}\circ \phi_{a_0}: \Lambda \times \Xi_{a_0} \rightarrow \C $ (holomorphic in $z$), which extends 
by the $\lambda$-Lemma to a dynamical holomorphic motion of $\overline{\Xi}_{a_0}$.
\subsubsection{The family $(C_a(z)=z+az^2+z^3)_{a \in \Lambda}$ restricts to a nice analytic family of
parabolic-like mappings}
Since
$\overline \Xi_a$ moves holomorphically, the points $v_a$ and $\bar v_a$
and the arc $\widehat{\gamma_a}$ defined in \ref{dynamicalconstruction}
depend holomorphically on $a$. Since equipotentials and external
rays move holomorphically, for every $a \in \Lambda$ the set $\partial U_a$ moves holomorphically. Hence the family 
$(C_a(z)=z+az^2+z^3)_{a \in \Lambda}$ restricts to a degree $2$ nice analytic family of parabolic-like maps.

\subsection{Review and overview}\label{anfam}
In \cite{LL} we proved that a degree $2$ parabolic-like map is hybrid conjugate to a member of the family
$Per_1(1)$ by changing its external class into the class of $h_2(z)=\frac{z^2+1/3}{z^2/3+1}$ (see Theorem 6.3 in \cite{LL}) and showing 
that a parabolic-like map is holomorphically conjugate to a member of the family $Per_1(1)$ if and only
if its external class is given by the class of $h_2$ (see Proposition 6.2 in \cite{LL}).
We defined a (quasiconformal) conjugacy between two parabolic-like maps $(f,U'_f,U_f,\gamma_f)$ and $(g,U'_g,U_g,\gamma_g)$
to be a (quasiconformal) homeomorphism
between (appropriate) restrictions of $U_f$ and $U_g$ which conjugates dynamics on $\Omega_f\cup\gamma_f$ (see Def. $3.3$ in \cite{LL}).
Let us review how we changed the external class of a degree $2$ parabolic-like map $f$ into the class of $h_2$.
As first step, we constructed a homeomorphism
$\widetilde{\psi}$, quasiconformal everywhere but at the parabolic fixed point,
between a fundamental annulus $A_f= \overline U_f \setminus \widetilde \Omega_f'$ of $f$ and
a fundamental annulus $A=\overline B \setminus \widetilde \Omega_B'$ of $h_2$. %on $\overline U_f \setminus (\widetilde \Omega_f'\cup \{z_f\})$.
Then we defined on $A_f$ an almost complex structure $\sigma_1$
by pulling back the standard structure by $\widetilde{\psi}$.
In order to  obtain on $U_f$ a bounded and invariant (under a map coinciding with $f$ on $\Omega_f$)
almost complex structure $\sigma$ we replaced $f$
with $h_2$ on $\Delta$, and spread $\sigma_1$ by the dynamics of this new map $\tilde f$ (and kept the standard
structure on $K_f$).
Finally, by integrating $\sigma$ we obtained a parabolic-like map
hybrid conjugate to $f$ and with external map $h_2$.

In this paper we want to perform this surgery for nice analytic families of degree $2$ parabolic-like maps, and prove
that the map $\chi:M_f \rightarrow M_1$ induced by the family of hybrid conjugacies extends to a continuous map $\chi:\Lambda \rightarrow \C$
which under suitable conditions restricts to a branched covering of $M_1 \setminus\{1\}$.
We will start by defining a family of quasiconformal maps, depending
holomorphically on the parameter, between a
fundametal annulus of $h_2$ and fundamental annuli $A\ll= \overline U\ll \setminus \widetilde\Omega'\ll$ of $f\ll,\,\,\lambda \in \Lambda$. In
analogy with the polynomial-like setting we will
call this family a \textit{holomorphic Tubing}.
In order to construct a holomorphic Tubing, fixed a $\lambda_0 \in \Lambda$, we will start by constructing
a quasiconformal homeomorphism $\widetilde{\psi}$
between $A$ and $A\l0$ (see Section \ref{psidiff}) and a
dynamical holomorphic motion $\widehat{\tau}:\Lambda \times A\l0 \rightarrow A\ll$ (see Section \ref{holmot}).
Hence we will obtain a holomorphic Tubing by composing the inverse of $\widetilde{\psi}$
with the holomorphic motion (see Section \ref{A}).
By Tubing, we will extend the map $\chi$ to the whole of $\Lambda$ (see Section \ref{external}).
We will prove that the map $\chi$ is continuous (see Section \ref{con}), holomorphic on the interior
of $M_f$ (see Section \ref{anal}) and with discrete fibers (see Section \ref{d}). Finally, we will
prove that, on compact subsets of $\Lambda$, the map $\chi$ is a degree $\mathcal{D}>0$ branched covering (see Section \ref{final}).
By defining proper families of parabolic-like maps we wil give the condition under which, for
each neighborhood $U$ of $1$, $\chi^{-1}(M_1 \setminus U)$ is a compact subset of $\Lambda$ (see Section \ref{p}).
This implies the following result:
\begin{teor}\label{bigthm}
 Given a proper family of parabolic-like maps $(f_{\lambda})_{\lambda \in \Lambda \approx \D}$, the map
 $\chi: M_f \rightarrow M_1\setminus\{1\}$ is a degree $\mathcal{D}>0$ branched covering. More precisely,
 for every neighborhood $U$ of $1$ in $\C$ (with $ 0 \notin U$)
there exists a neighborhood $\hat V$
of $M_1 \setminus U$ in $\chi(M_f)$ such that the map
$\chi: \chi^{-1}( \hat V)
\rightarrow  \hat V$ is a degree $\mathcal{D}>0$ branched covering.
\end{teor}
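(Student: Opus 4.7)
The plan is to mirror, in the parabolic setting, the Douady--Hubbard proof that the multiplier map of a polynomial-like family is a ramified covering, with the added subtlety that we must work around the special parameter $B=1$ (corresponding to $A=0$, i.e.\ $P_0$) which lies outside the image of $\chi$. The argument decomposes into three stages: (i) extend $\chi$ to all of $\Lambda$ via a holomorphic Tubing, (ii) establish the analytic properties of $\chi$ (continuity, holomorphy on $\mathrm{int}\,M_f$, and discreteness of fibers), and (iii) invoke the properness hypothesis to upgrade these properties into a finite branched covering onto its image, which will contain $M_1\setminus U$ for any neighbourhood $U$ of $1$.

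For step (i), I would fix a base parameter $\lambda_0\in\Lambda$, build the quasiconformal homeomorphism $\widetilde\psi$ between a fundamental annulus $A$ of $h_2$ and the fundamental annulus $A\l0$ of $f\l0$ as in \cite{LL}, and then use the nice-family data (holomorphic motions of the dividing arcs and of $\partial U\ll$) to obtain a dynamical holomorphic motion $\widehat\tau$ of $A\l0$ over all of $\Lambda$. The composition $\widehat\tau\ll\circ\widetilde\psi^{-1}$ gives a holomorphic Tubing. Performing the surgery of \cite{LL} fibre-by-fibre---replacing $f\ll$ by $h_2$ on $\Delta\ll$, spreading the pulled-back structure by the new dynamics, and integrating the resulting Beltrami coefficient---produces for each $\lambda$ a parabolic-like map hybrid-equivalent to some $P_{A(\lambda)}$, and hence an extension $\chi:\Lambda\to\C$, $\lambda\mapsto 1-A(\lambda)^2$, agreeing with the original $\chi$ on $M_f$.

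For step (ii), continuity of $\chi$ follows from the continuous dependence of the Ahlfors--Bers solution of the Beltrami equation on its coefficient, which itself varies continuously in $\lambda$ because the Tubing does; holomorphy on $\mathrm{int}\,M_f$ follows from the fact that on the filled Julia set the structure is standard, so the Beltrami coefficient varies holomorphically in $L^\infty$ and Ahlfors--Bers gives a holomorphic map into parameters; discreteness of the fibers over $\chi(M_f)$ comes from the uniqueness in the Straightening Theorem combined with the observation that a one-parameter holomorphic family of hybrid equivalences between the $f\ll$ and a fixed $P_A$ would produce an invariant Beltrami path contradicting rigidity in $Per_1(1)$. Step (iii) is where the properness hypothesis (Definition \ref{properfam}) intervenes: by definition, for any neighbourhood $U$ of $1$ with $0\notin U$, $\chi^{-1}(M_1\setminus U)$ is relatively compact in $\Lambda$, so the restriction of $\chi$ is a proper, continuous, interior-holomorphic map with discrete fibers between topological disks. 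The standard fact that such a map is a finite branched covering of some positive degree $\mathcal D$ then yields the theorem.

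The main obstacle I anticipate lies in step (i): ensuring that every stage of the surgery respects holomorphic dependence on $\lambda$, in particular that the Beltrami coefficient built from $\widehat\tau\ll\circ\widetilde\psi^{-1}$ and then spread by the dynamics of $\tilde f\ll$ is a measurable, essentially-bounded function of $z$ varying holomorphically in $\lambda$ in $L^\infty$. The parabolic fixed point is the delicate point, since $\widetilde\psi$ is only quasiconformal off that point; one must verify that the induced complex structure extends measurably across it uniformly in $\lambda$, so that the Ahlfors--Bers machinery supplies a genuinely holomorphic family of integrating maps. Once this is secured, the remaining ingredients are adaptations of well-established arguments from the polynomial-like case.
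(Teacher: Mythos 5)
Your overall architecture coincides with the paper's (Tubing, extension of $\chi$ to $\Lambda$, continuity plus interior holomorphy plus discrete fibers, then properness and degree theory), but two of your steps have genuine gaps. The first is continuity. Your justification --- continuous dependence of the Ahlfors--Bers solution on a coefficient that ``varies continuously in $\lambda$ because the Tubing does'' --- fails exactly where continuity is needed, namely at $\partial M_f$. The Beltrami form is declared to be $0$ on $K_\lambda$ and is spread by the dynamics outside it, and as $\lambda$ crosses $\partial M_f$ the set $K_\lambda$, hence the coefficient, does not vary continuously in $L^\infty$ or even in measure; this is the classical difficulty in the Douady--Hubbard continuity of straightening. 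The paper's argument instead uses that the integrating maps have locally bounded dilatation, hence form a precompact family, and a subsequential limit is then only a \emph{quasiconformal} (not hybrid) conjugacy between $f_{\lambda_0}$ and some $P_{\widetilde A}$. One therefore needs the separate rigidity statement (Prop.~\ref{7}): if $P_{A_1}$ and $P_{A_2}$ are quasiconformally conjugate and $B_1\in\partial M_1$, then $B_1=B_2$, proved by deforming along the Beltrami disk $t\mapsto t\widetilde\mu$ and observing that the resulting holomorphic parameter map would otherwise be open and push a boundary point of $M_1$ into $M_1$. Without this ingredient, continuity at $\partial M_f$ --- precisely where the degree is controlled --- is unsupported.

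The second gap is your extension of $\chi$ to $\Lambda\setminus M_f$ by ``straightening fibre-by-fibre.'' When $K_\lambda$ is disconnected the Straightening Theorem gives no uniqueness, so the assignment $\lambda\mapsto 1-A(\lambda)^2$ depends on choices and is not canonically defined, let alone continuous or open. The paper instead defines the extension by recording the external position of the critical value through the \emph{lifted} Tubing, $\lambda\mapsto \Phi^{-1}\circ T_{\lambda,n-1}^{-1}(c_\lambda)$, where $\Phi:\C\setminus M_1\to\C\setminus\overline\D$ is the canonical isomorphism. This both makes the extension well defined and exhibits it as quasiregular on $\Lambda'\setminus M_f$ for every $\Lambda'\subset\subset\Lambda$, which is what guarantees that the local degree $i_\lambda(\chi)$ is positive off $M_f$ and hence that the total degree $\mathcal{D}$ is positive. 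Your step (iii) invokes a ``standard fact'' for a proper, continuous map with discrete fibers, but without openness (quasiregularity) of $\chi$ outside $\mathring{M_f}$ that fact does not apply; also note that $M_1\setminus U$ and its preimage are not topological disks, so the degree must be assembled, as in the paper, from the local degrees over the components of a suitable neighborhood $\hat U$ of $\chi^{-1}(M_1\setminus U)$.
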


\section{Holomorphic motion and Tubings}
In this section we will
first, fixed a
$\lambda_0 \in \Lambda$, construct a fundamental annulus for $h_2$ and one for $f\l0$, and recall the
quasiconformal $C^1$-diffeomorphism $\widetilde{\psi}$ between these fundamental annuli. Then we will
construct a holomorphic motion of the ranges of the nice analytic family of parabolic-like maps, and by this holomorphic motion
derive fundamental annuli for
$f\ll$ from the fundamental annulus of $f\l0$.
Finally we will obtain a holomorphic Tubing by composing the inverse of $\widetilde{\psi}$
with the holomorphic motion.

\subsubsection{A fundamental annulus $A$ for $h_2$}
The map $h_2(z)=\frac{z^2 +1/3}{1+z^2/3}$ is an external map of every member of the family $Per_1(1)$ (see Prop. 4.2 in \cite{LL}). 
Let $h_2: W' \rightarrow W$ (where $W= \{z : \exp(-\epsilon)<|z|< \exp(\epsilon)\}$ for an $\epsilon>0$, and
$W'=h_2^{-1}(W)$) be a degree $2$ covering extension (this is, an extension such that $h_2: W' \rightarrow W$ is a degree $2$ covering
and there exists a dividing arc which devides $W'\setminus \D$ and $W'\setminus \D$ into $\Omega'_W,\,\Delta'_W$ and $\Omega'_W,\,\Delta'_W$ respectively,
such that $\Omega_W \setminus \Omega'_W$ is a topological quadrilateral; see Def. 5.2 in \cite{LL}).
Choose $\lambda_0 \in \Lambda$.
Let $h\l0$ be an external map of $f\l0$, $z_0$ be its parabolic fixed point and define 
$\gamma_{h\l0 +}=\alpha\l0(\gamma_{\lambda_0 +})$, $\gamma_{h\l0 -}=\alpha\l0(\gamma_{\lambda_0 -})$ (where $\alpha$ is
an external equivalence between $f\l0$ and $h\l0$).

Let $\Xi_{h_f +}$ and $\Xi_{h_f -}$ be repelling petals for the parabolic fixed point $z_0$ which intersect the unit circle and
$\phi_{\pm}: \Xi_{h_f \pm} \rightarrow \H_l  $ be Fatou coordinates for $h\l0$ with axis tangent to the unit
circle at the parabolic fixed point $z_0$. 
Let $\Xi_{h +}$ and $\Xi_{h -}$ be repelling petals which intersect the unit circle for the parabolic fixed point $z=1$ of $h_2$, and
let $\widetilde \phi_{\pm}: \Xi_{h \pm} \rightarrow \H_l  $ be Fatou coordinates for $h_2$ with axis tangent to the unit
circle at $1$. Define  $\widetilde{\gamma}_{+}= \widetilde{\phi}_{+}^{-1}(\phi_{h\l0 +}(\gamma_{h\l0 +}))$ and 
$\widetilde{\gamma}_{-}= \widetilde{\phi}_{-}^{-1}(\phi_{h\l0 -}(\gamma_{h\l0 -}))$. 

Define $\widetilde \Delta_W=h_2(\Delta_W \cap \Delta_W'),\,\,\,\, \widetilde W=\Omega_W \cup \widetilde{\gamma} \cup
\widetilde \Delta_W,\,\,\,\widetilde W'=h_2^{-1}(\widetilde W), \,\,\,\widetilde \Omega_W'=
\Omega_W' \cap \widetilde W',\,\,\,\widetilde \Delta_W'=\Delta_W' \cap \widetilde W'$ and $Q_W=\Omega_W \setminus
\widetilde \Omega_W'$. 
We call \textit{fundamental annulus for $h_2$} the topological annulus $A= \overline W \setminus (\widetilde \Omega'_W
\cup \D)$.
\subsubsection{A fundamental annulus $A\l0$ for $f\l0$ and the %quasiconformal map
map $\widetilde{\psi}: A\l0 \rightarrow A$}\label{psidiff}
%:A\setminus \{z_1\} \rightarrow A\l0\setminus \{z_0\}$}\label{psidiff}

Let $\Phi_{\Delta\l0}: \overline \Delta\l0 \rightarrow \overline \Delta_W$
 be a homeomorphism coinciding with
$\widetilde{\phi}_{\pm}^{-1}\circ \phi_{h\l0 \pm} \circ \alpha\l0$ on $\gamma_{\lambda_0 \pm}$,
quasiconformal on $\overline \Delta\l0 \setminus \{z\l0\}$
(where $z\l0$ is the parabolic fixed point of $f\l0$) and real-analytic diffeomorphism
on $\Delta\l0$
(see Claim $6.1$ in the proof of Thm. $6.3$ in \cite{LL}). Define
$\widetilde{\Delta}\l0= \Phi_{\Delta\l0}^{-1}(\widetilde \Delta_W)$,
$\widetilde{\Delta'}\l0=\Phi_{\Delta\l0}^{-1}(\widetilde \Delta_W')$, and $\widetilde{U}\l0= (\Omega\l0  \cup \gamma\l0
\cup
\widetilde{\Delta}\l0) \subset U\l0$.
Consider
$$ \widetilde{f}\l0(z)=\left\{
\begin{array}{cl}
\Phi_{\Delta\l0}^{-1} \circ h_2 \circ \Phi_{\Delta\l0} &\mbox{on  } \widetilde{\Delta'}\l0 \\
f\l0  &\mbox{on  }  \Omega\l0'\cup \gamma\l0\\
\end{array}\right.
$$
Define $\widetilde{U'}\l0=\widetilde{f}\l0^{-1}(\widetilde{U}\l0)$, $Q\l0= \Omega\l0 \setminus \overline
{\widetilde{\Omega'}}\l0$,
and the \textit{fundamental annulus} $A\l0=\overline  U\l0 \setminus \widetilde{\Omega'}\l0$.

Let $\Phi_{Q\l0}: \overline Q\l0 \rightarrow \overline Q_W$ be a quasiconformal map which coincides
with $\widetilde{\phi}_{\pm}^{-1}\circ \phi_{h\l0 \pm} \circ \alpha\l0$ on
$\gamma_{\lambda_0 \pm}$ (see the proof Claim $6.2$ in Thm. $6.3$ in \cite{LL}).
 Define a
map $\widetilde{\psi}: A\l0\rightarrow A$ as follows :
$$ \widetilde{\psi}(z)=\left\{
\begin{array}{cl}
%\widetilde{\phi}_{\pm}^{-1}\circ \phi_{h\l0 \pm}\circ \alpha\l0 &\mbox{on  } \gamma_{\lambda_0 \pm}\\
\Phi_{\Delta\l0}  &\mbox{on  } \overline\Delta\l0\\
\Phi_{Q\l0}  &\mbox{on  } \overline Q\l0 \\
\end{array}\right.
$$ 
The map $\widetilde{\psi}$ is a homeomorphism, quasiconformal on $A\l0 \setminus \{z\l0\}$,
so the map
$\widetilde{\Psi}:=\widetilde{\psi}^{-1}:A \rightarrow A\l0$ is a homeomorphism, quasiconformal on $ A \setminus \{1\}$.

\subsubsection{Holomorphic motion of the fundamental annulus $A\l0$}\label{holmot}
Define for all $\lambda \in \Lambda$ the set $a\ll=\overline U\ll \setminus \Omega'\ll$. Then the set
$a\ll$ is a topological annulus.
Define the map $\widetilde{\tau}: \Lambda \times \partial a\l0 \rightarrow \partial a\ll$ as follows:
$$ \widetilde{\tau}(z)=\left\{
\begin{array}{cl}
\Phi\ll  &\mbox{on  } \gamma\l0\\
B\ll &\mbox{on  } \partial U\l0 \\
f\ll^{-1}\circ B\ll \circ f\l0 &\mbox{on  } \partial U'\l0 \cap \partial \Omega'\l0\\
\end{array}\right.
$$ 
Since $\Phi\ll$ and $B\ll$ are holomorphic motions with disjoint
images on $\partial a\l0 \setminus \{\gamma_{\lambda_0}(1),\gamma_{\lambda_0}(-1)\}$,
and $f\ll: \partial U'\ll \rightarrow \partial U\ll$ is a degree $d$ covering, $\widetilde{\tau}$ is a
holomorphic motion with basepoint $\lambda_0$.
Since $\Lambda\approx \D$, by the Slodkowski's Theorem we can extend $\widetilde{\tau}$ to a holomorphic motion
$\widetilde{\tau}: \Lambda\times \widehat{\C} \rightarrow \widehat{\C}$. In particular we obtain a holomorphic motion of
$\widetilde{U}\l0$. For every $\lambda \in \Lambda$, define
$\widetilde{U}\ll=\widetilde{\tau}(\widetilde{U}\l0)$,
and $\widetilde{\Delta'}\ll=\widetilde{\tau}(\widetilde{\Delta'}\l0)$. Define for every
$\lambda \in \Lambda$ the map $\widetilde{f}\ll$ as follows:
$$ \widetilde{f}\ll(z)=\left\{
\begin{array}{cl}
\widetilde{\tau} \circ \widetilde{\Psi}\circ  h_2 \circ \widetilde{\Psi}^{-1} \circ \widetilde{\tau}^{-1} &\mbox{on  }
\widetilde{\Delta'}\ll \\
f\ll  &\mbox{on  }  \Omega\ll'\cup \gamma_{f\ll}\\
\end{array}\right.
$$
and the set
$\widetilde{U}'\ll=\widetilde{f}\ll^{-1}(\widetilde{U}\ll)$.
Finally, define for all $\lambda \in \Lambda$ the set $A\ll= \overline{U}\ll \setminus  \widetilde\Omega'\ll$. Then the set
$A\ll$ is a topological annulus, and we call it \textit{fundamental annulus of $f\ll$}.
The holomorphic motion $\widetilde{\tau}: \Lambda\times \widehat{\C} \rightarrow \widehat{\C}$ restricts to a holomorphic
motion $$\widehat{\tau}:\Lambda \times A\l0 \rightarrow A\ll$$
which respects the dynamics. 
\subsubsection{Holomorphic Tubings}\label{A}
Define $T:=\widehat{\tau}\circ \widetilde{\Psi}: \Lambda\times A \rightarrow A\ll$. We call the map $T$ a \textbf{holomorphic Tubing}. A holomorphic Tubing is not a
holomorphic motion, since $T\l0=\widetilde{\Psi}\neq Id$, but nevertheless it is
quasiconformal in $z$ for every fixed $\lambda \in \Lambda$ and holomorphic in $\lambda$ for every fixed $z \in A$.
\subsubsection{Straightening of the members of the family $(f\ll)_{\lambda\in \Lambda}$}
Let us now straighten the members of the family $(f\ll)_{\lambda\in \Lambda}$ to members of the family $Per_1(1)$.
For every $\lambda \in \Lambda$ define on $U\ll$ the Beltrami form $\mu\ll$ as follows:
$$ \mu_{\lambda}(z)=\left\{
\begin{array}{cl}
\mu_{\lambda,0}=(T \ll)_*(\sigma_0) &\mbox{on  } A\ll \\
\mu_{\lambda,n}=(\widetilde f_{\lambda}^{n})^*\mu_{\lambda,0} &\mbox{on  } (\widetilde f_{\lambda})^{-n}(A\ll)\\
0 &\mbox{on  } K_{\lambda}\\
\end{array}\right.
$$
For every $\lambda$ the map $T\ll$ is quasiconformal, hence
$||\mu_{\lambda,0}||_{\infty} \leq k<1$ on every compact subset of $\Lambda$. On $\widetilde \Omega'_{\lambda}$
the Beltrami form $\mu_{\lambda,n}$
is obtained by spreading $\mu_{\lambda,0}$ by the
dynamics of $f\ll$, which is holomorphic, while on $\Delta\ll$ the Beltrami form $\mu_{\lambda,n}$ is constant for all
$n$ (by construction of the map $\widetilde f_{\lambda}$).
 Thus $||\mu_{\lambda}||_{\infty}=||\mu_{\lambda,0}||_{\infty}$, which is
bounded. By the measurable Riemann mapping theorem (see \cite{Ah}) for every $\lambda \in \Lambda$ there exists
a quasiconformal map $\phi\ll: U\ll \rightarrow \D$ such that $(\phi\ll)^*\mu_0=\mu\ll$. Finally, for
every $\lambda \in \Lambda$ the map
$g\ll=\phi\ll \circ f\ll \circ \phi\ll^{-1}$ is a parabolic-like map hybrid conjugate to $f\ll$ and
holomorphically conjugate to a member of the family $Per_1(1)$ (see Prop. 6.2 in \cite{LL}).
\begin{remark}\label{locboundil}
 Note that for every $\lambda \in \Lambda$, the dilatation of the integrating map $\phi\ll$ is equal to the dilatation
of the holomorphic Tubing $T\ll$. So the family of integrating maps $(\phi\ll)_{\lambda\in\Lambda}$ has locally bounded dilatation.
\end{remark}

\subsubsection{Lifting Tubings}\label{Li}
Let us lift the Tubing $T\ll$. Define $A_{\lambda,0}= A\ll$,
$B_{\lambda,1}= 
\widetilde f\ll^{-1}(A_{\lambda,0})$, $A_0= A$ and $B_1=
h_2^{-1}(A_0)$. Hence $\widetilde f\ll :B_{\lambda,1} \rightarrow  A_{\lambda,0}$ and $h_2: B_1 \rightarrow A_0$ are
degree $2$ covering maps, and we can lift the Tubing $T\ll$ to
$T_{\lambda,
1}:= \widetilde f\ll^{-1} \circ T\ll \circ h_2: B_1 \rightarrow B_{\lambda,1}$ (such that $T_{\lambda,1}=T_{\lambda}$
on $B_1 \cap A_0$). 

Define recursively $A_{\lambda,n}= B_{\lambda,n} \cap \widetilde U$, $B_{\lambda,n+1}= 
\widetilde f\ll^{-1}(A_{\lambda,n})$, $A_n= B_n \cap \widetilde W$ and $B_{n+1}=
h_2^{-1}(A_n)$. Hence $\widetilde f\ll :B_{\lambda,n+1} \rightarrow  A_{\lambda,n}$ and $h_2: B_{n+1} \rightarrow A_n$
are
degree $2$ covering maps, and we can lift the Tubing to $T_{\lambda,
n+1}:= \widetilde f\ll^{-1} \circ T_{\lambda,n} \circ h_2: B_{n+1} \rightarrow B_{\lambda,n+1}$ (such that
$T_{\lambda,n+1}=T_{\lambda,n}$ on $B_{n+1} \cap B_n$).

 In the case
$K\ll$ is connected, we can lift the Tubing $T\ll$ to the whole of $(W\cup W')\setminus \overline
\D$. If $K\ll$ is not connected, the
maximum domain we can lift the Tubing $T\ll$ to is $B_{n_0}$, such that $B_{\lambda,n_0}$ contains the critical value of
$f\ll$. Note that the extension is still quasiconformal in $z$.

\section{Properties of the map $\chi$}   
    Consider the map $\chi:  M_f \rightarrow M_1 \setminus \{1\}$ (defined in Section \ref{intro})
which associates to each $\lambda \in  M_f$ the multiplier of the fixed point of the map $P_A$ hybrid
equivalent to $f_{\lambda}$. In this section, we will first extend the map $\chi$ to the whole of $\Lambda$ (see Section \ref{external}), then
prove that the map $\chi:\Lambda \rightarrow \C $ is continuous at the boundary of $M_f$ (see Prop. \ref{con}) and that it depends analytically
on $\lambda$ for $\lambda
 \in \mathring{M_f}$ (see Prop. \ref{an}). Finally, we will prove that the map $\chi$ has discrete fibers (see Prop. \ref{discretness})
\subsection{Extending the map $\chi$ to all of $\Lambda$}\label{external}
% We can extend the map $\chi$ to all of $\Lambda$ by Tubings. Since Tubings are not unique, the
% extension given here (which follows the one in
% \cite{DH}) is not canonical, but it is anyway, given a Tubing, the 'natural' one.
Let $T$ be a holomorphic tubing for the nice analytic family of parabolic-like maps $\textbf{f}$. Call $c\ll$ the
critical point of $f\ll$ and let $n$ be such that $f\ll^n(c\ll) \in A\ll$, $f\ll^{n-1}(c\ll) \notin A\ll$.
 Lift the holomorphic tubing $T\ll$ to $T_{\lambda,
n-1}$ (see Section \ref{Li}).
 We
can therefore extend the map $\chi$ by setting:
$$\chi:\Lambda \setminus M_f \rightarrow \C \setminus M_1 $$ 
$$ \lambda \rightarrow  \Phi^{-1}\circ T_{\lambda, n-1}^{-1}(c\ll)$$
where
$\Phi: \C\setminus M_1
\rightarrow \C
\setminus \overline \D$ is the canonical isomorphism between the complement of $M_1$
and the complement of the unit
disk (see \cite{M2}).
%Since lift $T_{\lambda,n-1}$ of the holomorphic Tubing $T\ll$ is quasiconformal, and
%the map $\Phi$ is an isomorphism,
Since the Tubing $T\ll$ has locally bounded dilatation, the map $\chi:\Lambda \setminus M_f \rightarrow \C \setminus M_1$ is quasiregular on $\Lambda' \setminus M_f $
for any open $\Lambda' \subset \subset \Lambda$. 

\subsection{Indifferent periodic points}
An indifferent periodic point $z'$ for $f_{\lambda_0},\,\,\lambda_0 \in \Lambda$,
is called \textit{persistent} if for each neighborhood $V(z')$
of $z'$ there exists a neighborhood $W(\lambda_0)$ of $\lambda_0$ such that, for every $\lambda \in W(\lambda_0)$ the
map $f_{\lambda}$ has in
$V(z')$ an indifferent periodic point $z'\ll$ of the same period and multiplier (see \cite{MSS}).
Let $(f\ll)_{\lambda \in \Lambda}$ be an family of parabolic-like mappings.
For all $\lambda \in \Lambda$, the parabolic fixed point is persistent. Since all the other
indifferent periodic points are non persistent, in the remainder we will call them
\textit{indifferent periodic points} without further notation.
\begin{prop}\label{neutparametervalues}
The indifferent parameter values for a family of parabolic-like mappings
belong to $\partial M_f$. 
\end{prop}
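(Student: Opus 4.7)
Let $\lambda_0 \in \Lambda$ and suppose $f_{\lambda_0}$ has a non-persistent indifferent periodic orbit of period $p$ passing through a point $z_0$ with multiplier $\rho_0$ of modulus one, distinct from the persistent parabolic fixed point. In the generic case $\rho_0 \neq 1$, the implicit function theorem applied to $f_\lambda^p(z) - z = 0$ at $(\lambda_0, z_0)$ yields a holomorphic continuation $z(\lambda)$ on a disk $W \ni \lambda_0$, with holomorphic multiplier $\rho(\lambda) = (f_\lambda^p)'(z(\lambda))$. Non-persistence is equivalent to $\rho$ being non-constant on $W$. The plan is to show separately that $\lambda_0 \in \overline{M_f}$ and that $\lambda_0 \notin \mathring{M_f}$.

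\emph{Step 1: $\lambda_0 \in \overline{M_f}$.} By the open mapping theorem there exist $\lambda_n \to \lambda_0$ with $|\rho(\lambda_n)| < 1$, so $z(\lambda_n)$ is an attracting cycle. The parabolic-like map $f_\lambda\colon U_\lambda' \to U_\lambda$ has degree $2$ and hence a single critical point $c_\lambda$ by Riemann--Hurwitz, and the local proof of Fatou's theorem (valid unchanged in the parabolic-like setting) shows that the immediate basin of any non-parabolic attracting cycle must contain $c_\lambda$. Therefore $c_{\lambda_n}$ stays in $K_{\lambda_n}$, $K_{\lambda_n}$ is connected, and $\lambda_n \in M_f$.

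\emph{Step 2: $\lambda_0 \notin \mathring{M_f}$.} Suppose toward a contradiction that some open $V \ni \lambda_0$ lies in $M_f$. Then $c_\lambda \in K_\lambda$ for every $\lambda \in V$, so after shrinking $V$ the holomorphic maps $\lambda \mapsto f_\lambda^n(c_\lambda)$ take values in a common compact subset of $\C$ and form a normal family on $V$. A standard Ma\~n\'e--Sad--Sullivan argument \cite{MSS} then produces a dynamical holomorphic motion of the Julia sets $J_\lambda$ on $V$, which forces the continuation $z(\lambda)$ (an indifferent, hence Julia, point at $\lambda_0$) to remain in $J_\lambda$; in particular $|\rho(\lambda)| \geq 1$ on $V$. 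Applying the maximum modulus principle to the holomorphic function $1/\rho$ at its interior maximum $\lambda_0$ then gives $\rho$ constant near $\lambda_0$, contradicting non-persistence.

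\textbf{Main obstacle.} The delicate point is Step 2: verifying that the MSS machinery --- analytic continuation of repelling cycles, extension to a motion of $J_\lambda$ via the $\lambda$-lemma, and dynamical compatibility --- transfers to the parabolic-like setting. This goes through because persistence of the parabolic fixed point is built into the definition of analytic family, so its orbit is automatically part of the motion, while normality of the single critical orbit drives everything else. The degenerate case $\rho_0 = 1$ (a non-persistent parabolic cycle different from $z_\lambda$) is handled analogously: parabolic bifurcation produces attracting cycles in every neighborhood, so Step 1 still yields $\lambda_0 \in \overline{M_f}$, and the same MSS obstruction in Step 2 rules out $\lambda_0 \in \mathring{M_f}$.
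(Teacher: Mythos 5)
Your Step 1 is essentially correct (the paper gets the containment in $M_f$ even more directly: for $\lambda\notin M_f$ the critical point escapes, $f_\lambda$ is hyperbolic, hence has no non-persistent indifferent cycles). The gap is in Step 2, and it is twofold. First, the parenthetical ``an indifferent, hence Julia, point'' is false in general: the center of a Siegel disk is an indifferent periodic point lying in the Fatou set, so for linearizable irrationally indifferent cycles the chain ``the motion of $J_\lambda$ carries $z(\lambda_0)$, hence $z(\lambda)\in J_\lambda$, hence $|\rho(\lambda)|\geq 1$ on $V$'' never gets started. Since the proposition concerns \emph{all} indifferent parameters, this case cannot be waved away; it is repairable (e.g.\ continuations of repelling perturbations of the cycle, closedness of $J_{\lambda_0}$ and equicontinuity of the motion force $z_0\in J_{\lambda_0}$, a contradiction in the Siegel case), but you would have to supply that argument. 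Second, and more seriously, the whole step rests on ``the standard MSS machinery produces a dynamical holomorphic motion of $J_\lambda$ on $V$''. In the parabolic-like setting this is an unproved theorem at least as heavy as the proposition itself, and in the usual proofs of the MSS equivalences the statement ``no non-persistent indifferent cycles on a stable component'' is one of the first links in the chain --- established by precisely the critical-orbit argument below --- so your route is dangerously close to circular.

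The paper avoids all of this with a short self-contained argument following Douady--Hubbard: on a neighborhood $W\subset\mathring{M_f}$ of $\lambda_0$ pick $\lambda_n\to\lambda_0$ with $|\rho(\lambda_n)|<1$; at these parameters the critical orbit converges to the attracting cycle, so every locally uniform limit of the normal family $F_p(\lambda)=f_\lambda^{i+kp}(c_\lambda)$ agrees with the continued cycle $\alpha(\lambda)$ on the set $\{\lambda_n\}$, which accumulates at $\lambda_0$; by the identity theorem $F_p\to\alpha$ on all of $W$, which is absurd at the parameters $\lambda^*\in W$ where $\alpha(\lambda^*)$ is repelling. This uses only normality of the critical orbit on $\mathring{M_f}$ --- no motion of the Julia sets --- and is insensitive to whether the cycle at $\lambda_0$ is Siegel, Cremer or parabolic. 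Note also that in the case $\rho_0=1$ your ``handled analogously'' conceals a real issue: the implicit function theorem no longer furnishes a holomorphic continuation $z(\lambda)$, and the paper must first pass to the double cover $t\mapsto t^2+\lambda_0$ before running the argument.
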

\begin{proof}
The proof follows the proof of Prop. 11 in \cite{DH}.
Since for all $\lambda \in \Lambda \setminus M_f$ the critical point $c\ll$ of $f\ll$ belongs
to $(U'_{\lambda} \setminus K_{\lambda})$, the map $f\ll$ is hyperbolic.

Assume that for $\lambda_0 \in \mathring{M_f}$ the map $f_{\lambda_0}$ has an
indifferent periodic point $\alpha_0$ of period $k$, and assume first $(f^k)'(\alpha_0)\neq 1$.
By the Implicit Function Theorem there exist $W,\,\,V$ neighborhoods of $\lambda_0$ and $\alpha_0$ respectively,
with $W \subset \mathring{M_f}$, where the indifferent cycle and the critical point move holomorphically with the parameter,
and where the multiplier map $\rho(\lambda)= (f\ll^k)'(\alpha\ll)$ is a holomorphic non constant map. Set $\alpha(\lambda)=\alpha_{\lambda}$.
By taking a restrictions if necessary, we can assume $\lambda_0$ is the only parameter in $W$
for which $f\ll$ has in $V$ an indifferent periodic point.
Let $(\lambda_n)$ be a sequence in $W$ converging to $\lambda_0$, such that for all $n$, $|\rho(\lambda_n)|<1$.
Hence for all $n$, there exists a $\alpha^i(\lambda_n) \in \{\alpha^0(\lambda_n),\,...,\alpha^{k-1}(\lambda_n)\}$ such that
$$f^{i+kp}_{\lambda_n}(c_{\lambda_n}) \rightarrow \alpha^i(\lambda_n) \,\,\, \mbox{as } p \rightarrow \infty$$
We can assume $i$ independent of $\lambda$ by choosing a subsequence.
Let us define for all $\lambda \in W$ the sequence
$$F_p(\lambda)=f^{i+kp}_{\lambda}(c_{\lambda}).$$
Since $(F_p)$ is a family of analytic maps bounded on any compact
subset of $W$, it is a normal family. Let $F_{p_n}$ be a subsequence converging to some function
$h$. 
Then $h(\lambda_n)=\alpha(\lambda_n)$ for all $n$, hence $h=\alpha$ and
for all $\lambda \in W$, $F_p(\lambda) \rightarrow \alpha(\lambda)$.
But in $W$ there are parameters $\lambda^*$ for which
$\alpha(\lambda^*)$ is a repelling periodic point, and thus it cannot
attract the sequence $F_p(\lambda^*)$.

In the case $(f^k)'(\alpha_0)= 1$, let $\Lambda_0$ be a neighborhood of $\lambda_0 \in  \mathring{M_f}$, let
$\lambda: W(0) \rightarrow \Lambda_0$, $t \rightarrow t^2+\lambda_0,$ be a branched covering of $\Lambda_0$ branched at $0$
for some neighborhood $W(0)$ of $0$, and repeat the previous argument.
\end{proof}

\subsection{Continuity of $\chi$ on $\partial M_f$}\label{con}
In this Section we prove that the map $\chi:\Lambda \rightarrow \C $ is continuous
on the boundary of $M_f$.
\begin{prop}\label{7}
Suppose $A_1,\,\,A_2 \in \C$, with $B_1=1-(A_1)^2 \in \partial M_1$. If the maps $P_{A_1}$ and
$P_{A_2}$ are quasiconformally conjugate, then $B_1=B_2$.
\end{prop}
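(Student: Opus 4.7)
The plan is to embed the given conjugacy $\phi$ into a holomorphic family of quasiconformal conjugacies of $P_{A_1}$ with members of $Per_1(1)$, and then use the fact that quasiconformal conjugacy preserves connectivity of the filled Julia set to force a contradiction with $B_1 \in \partial M_1$, in the spirit of the Ma\~n\'e-Sad-Sullivan deformation argument. First I would normalize $\phi$: both $P_{A_1}$ and $P_{A_2}$ have a unique parabolic fixed point at $\infty$ (multiplier $1$) and critical set $\{1,-1\}$, so $\phi$ must fix $\infty$ (which is topologically distinguished among the fixed points) and permute $\{1,-1\}$. After possibly post-composing with the involution $\iota(z) = -z$, which conjugates $P_{A_2}$ to $P_{-A_2}$ and leaves $B_2 = 1 - A_2^2$ unchanged, I may assume $\phi(1) = 1$ and $\phi(-1) = -1$. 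Let $\mu = \partial_{\bar z}\phi / \partial_z\phi$; this is a $P_{A_1}$-invariant Beltrami coefficient with $\|\mu\|_\infty = k < 1$.

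Next, for every $t$ in the disk $\D_{1/k}$ the form $t\mu$ is $P_{A_1}$-invariant with $\|t\mu\|_\infty < 1$. By the measurable Riemann mapping theorem, with the Ahlfors-Bers holomorphic dependence on parameters, there exists a unique quasiconformal map $\phi_t: \widehat{\C} \to \widehat{\C}$ integrating $t\mu$ and fixing the three points $\infty, 1, -1$, depending holomorphically on $t$, with $\phi_0 = \mathrm{id}$ and $\phi_1 = \phi$. The conjugate $P_t := \phi_t \circ P_{A_1} \circ \phi_t^{-1}$ is a degree $2$ rational map with a parabolic fixed point at $\infty$ of multiplier $1$ and critical points at $\pm 1$, hence it lies in $Per_1(1)$ and equals $P_{A(t)}$ for a uniquely determined $A(t)\in\C$. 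The resulting map $A: \D_{1/k} \to \C$ is holomorphic, with $A(0) = A_1$ and $A(1) = A_2$.

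If $A$ were non-constant, the open mapping theorem would imply that $A(\D_{1/k})$ is open in $\C$, and hence its image $V$ under the non-constant holomorphic map $A \mapsto 1 - A^2$ would be an open neighborhood of $B_1$. Every point of $V$ is the $B$-parameter of a member of $Per_1(1)$ quasiconformally conjugate to $P_{A_1}$, so the connectivity of the corresponding filled Julia set is constant on $V$; hence either $V \subset M_1$ or $V \cap M_1 = \emptyset$, both contradicting $B_1 \in \partial M_1$. Therefore $A \equiv A_1$, giving $A_2 = A(1) = A_1$ (up to the earlier sign normalization) and hence $B_1 = B_2$. The main obstacle will be verifying that fixing $\phi_t$ at the three points $\infty, 1, -1$ automatically places $P_t$ in $Per_1(1)$; this works precisely because the normalization of the family $Per_1(1)$ (parabolic point at $\infty$, critical points at $\pm 1$) coincides with our three chosen fixed points, so no further M\"obius adjustment is needed and the dependence $t \mapsto A(t)$ is indeed holomorphic.
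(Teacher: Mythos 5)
Your deformation argument is sound and is essentially the same mechanism the paper uses: embed the conjugacy in a holomorphic disc of $P_{A_1}$-invariant Beltrami forms $t\mu$, integrate with a three-point normalization so that the conjugated maps stay in the normal form $z+1/z+A(t)$ with $A(t)$ holomorphic, and rule out non-constancy because an open set of $B$-parameters of maps quasiconformally conjugate to $P_{A_1}$ would have to lie entirely inside or entirely outside $M_1$, which is impossible around the boundary point $B_1$. Your check that the normalization at $\infty,1,-1$ forces the normal form is correct (a degree-two rational map with multiplier $1$ at $\infty$ and critical points at $\pm1$ is necessarily $z+1/z+A$), and the sign ambiguity $[P_A]=\{P_A,P_{-A}\}$ is handled properly.

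The one genuine gap is the implicit assumption that $\phi$ is a global quasiconformal homeomorphism of $\widehat{\C}$. In this paper ``quasiconformally conjugate'' means conjugate as parabolic-like maps, i.e.\ $\phi$ is only defined on a neighbourhood of $K_{P_{A_1}}$; this is exactly what the application in Prop.~\ref{conti} produces (limits of hybrid conjugacies defined on $U\l0$). With only a local conjugacy you cannot normalize $\phi$ at $\infty$, you cannot take its Beltrami coefficient on all of $\widehat{\C}$, and you cannot identify $\phi_1$ with $\phi$ at $t=1$. The repair, which is what the paper does, is to set $\widetilde\mu=\phi^*\mu_0$ on the totally invariant set $K_{P_{A_1}}$ and $0$ elsewhere; this is still a globally $P_{A_1}$-invariant form, the same openness/constancy argument shows $B(t)$ is constant, but at $t=1$ one only learns that $\phi\circ\Phi_1^{-1}$ is a \emph{hybrid} conjugacy between $P_{A_1}$ and $P_{A_2}$, so a final appeal to the rigidity statement Prop.~6.5 of \cite{LL} is still required (together with the separate, immediate case where $K_{P_{A_1}}$ has measure zero and $\phi$ is already hybrid). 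As written, your argument proves the proposition under the stronger hypothesis of a global conjugacy; to prove what the paper actually needs you must add these two steps.
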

\begin{proof}
Let $(P_1, U', U, \gamma_1)$ and $(P_2, V', V, \gamma_2)$ be parabolic-like restrictions of $P_{A_1}$ and $P_{A_2}$
respectively (for the construction of a parabolic-like restriction of members of the family $Per_1(1)$ see the proof of Prop. 4.2 in \cite{LL}),
and let $\varphi:U \rightarrow V$ be a quasiconformal conjugacy between them. 
If $K_{P_1}$ is of measure zero (where $K_{P_1}= \widehat \C \setminus B_{\infty}$, and $B_{\infty}$ is the parabolic basin of attraction of infinity,
see Section 1 in \cite{LL}), then $\phi$ is a hybrid conjugacy and the result follows from
Prop. 6.5 in \cite{LL}.

Let $K_{P_1}$ be not of measure zero.
Define on $\widehat{\C}$ the following Beltrami form:
$$\widetilde{\mu}(z):= \left\{
\begin{array}{cl}
(\phi)^*\mu_0 &\mbox{on  }  K_{P_1} \\
0 &\mbox{on  } \widehat{\C}\setminus K_{P_1}\\
\end{array}\right.
$$
Since $\phi$ is quasiconformal, $||\widetilde{\mu}||_{\infty}=k<1$. Therefore for $|t|<1/k$ we can define on $\widehat{\C}$ the family of
Beltrami form $\mu_t=\widetilde{\mu} t$, and $||\mu_t||_{\infty}<1$. The family $\mu_t$ depends holomorphically on $t$.
 Let $\Phi_t: \widehat{\C} \rightarrow \widehat{\C}$ be the family of integrating maps fixing $\infty$, $1$ and $0$.
Hence the family $\Phi_t$ depends
holomorphically on $t$, $\Phi_1=\phi$ and $\Phi_0=Id$.
The family of holomorphic maps $F_t=\Phi_t \circ P_{A_1} \circ \Phi_t^{-1}$ has the form
$F_t(z)=z+1/z+ A(t)$ (since it is a family of quadratic rational maps with a parabolic fixed point at $z=\infty$ with
preimage at $z=0$ and a critical point at $z=-1$)
and it depends holomorphically on $t$. Therefore the map $\alpha: t \rightarrow B(t)=1-A^2(t)$ is
holomorphic, hence it is either an open or constant, and $\alpha(0)=B_1 \in \partial M_1$.
If $\alpha(t)$ is open, there exists a neighborhood $W$ of $0$ such that $\alpha(W)
\subset M_1$. Hence the map $\alpha(t)$ is constant, so for all $t$, $\alpha(t)=B_1$. In particular $\alpha(1)=B_1$, and $F_1=P_{A_1}$.

Finally the map $\phi \circ \Phi_1^{-1}$ is a quasiconformal conjugacy between $P_{A_1}$ and $P_{A_2}$ with $(\phi
\circ \Phi_1^{-1})^*\mu_0=\mu_0$ on $K_{P_1}$. So $P_{A_1}$ and $P_{A_2}$ are hybrid equivalent, and the result 
follows by Prop. 6.5 in \cite{LL}.
\end{proof}

\begin{prop}\label{conti}
The map $\chi:\Lambda \rightarrow \C $ is continuous at any point $\lambda \in \partial M_f$,
and moreover $\chi(\lambda) \in \partial M_1$.
\end{prop}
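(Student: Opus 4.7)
The plan is to combine a normal families argument for the integrating maps from Section 3 with Proposition \ref{7}.

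Fix a sequence $\lambda_n\to\lambda_0\in\partial M_f$ and let $B^*$ be any subsequential limit of $\chi(\lambda_n)$; writing $B_n=\chi(\lambda_n)=1-A_n^2$, let $\Psi_n$ be the integrating map $\phi_{\lambda_n}$ from Section 3 post-composed with the Möbius map conjugating $g_{\lambda_n}$ to $P_{A_n}$, so that $\Psi_n$ is a quasiconformal conjugacy from $f_{\lambda_n}$ to $P_{A_n}$. By Remark \ref{locboundil} the dilatations of the $\Psi_n$ are uniformly bounded on compact subsets of $\Lambda$; using the holomorphic motion $B_\lambda$ of the ranges (supplied by niceness) to pull the $\Psi_n$ back to a common subdomain of $U_{\lambda_0}$, and fixing three marked points via the Möbius freedom, I obtain a normal family. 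Extract a subsequence with $\Psi_{n_k}\to\Psi^*$ and $A_{n_k}\to A^*$. The normalization forces $\Psi^*$ to be quasiconformal (not constant), and passing to the limit in $P_{A_{n_k}}\circ\Psi_{n_k}=\Psi_{n_k}\circ f_{\lambda_{n_k}}$ gives that $\Psi^*$ quasiconformally conjugates $f_{\lambda_0}$ to $P_{A^*}$ with $B^*=1-(A^*)^2$.

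Since $\lambda_0\in M_f$, the Straightening Theorem supplies a hybrid conjugacy $\psi_0$ from $f_{\lambda_0}$ to $P_{A_0}$ with $B_0=\chi(\lambda_0)$, and $\psi_0\circ(\Psi^*)^{-1}$ is then a quasiconformal conjugacy between parabolic-like restrictions of $P_{A^*}$ and $P_{A_0}$; in particular $P_{A^*}$ has connected filled Julia set and $B^*\in M_1$. To conclude that $B_0\in\partial M_1$, apply the same construction to a sequence $\mu_n\in\Lambda\setminus M_f$ converging to $\lambda_0$ (which exists since $\lambda_0\in\partial M_f$): any subsequential limit $B^{**}$ lies in $\overline{\C\setminus M_1}$ by construction and in $M_1$ by the preceding argument, hence $B^{**}\in\partial M_1$, and Proposition \ref{7} gives $B^{**}=B_0$, so $B_0\in\partial M_1$. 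Returning to the original sequence with $B_0\in\partial M_1$ now in hand, Proposition \ref{7} applied to the quasiconformally conjugate pair $P_{A^*},P_{A_0}$ yields $B^*=B_0=\chi(\lambda_0)$; since every subsequential limit of $\chi(\lambda_n)$ equals $\chi(\lambda_0)$, continuity at $\lambda_0$ follows.

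The main obstacle is the careful setup of the normal family on a common domain: the maps $\Psi_n$ live on the moving domains $U_{\lambda_n}$, so one must exploit the holomorphic motion of ranges to transport them to a fixed subdomain, verify uniform quasiconformality there, and justify passage to the limit in the conjugacy equation while identifying the limiting map as $f_{\lambda_0}$ on the correct domain.
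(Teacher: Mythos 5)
Your proof is correct, and its engine --- precompactness of the straightening conjugacies via the locally bounded dilatation of Remark~\ref{locboundil}, identification of subsequential limits as quasiconformal conjugacies from $f_{\lambda_0}$ to some $P_{A^*}$, and then rigidity on $\partial M_1$ via Proposition~\ref{7} applied twice --- is exactly the paper's. The one place you genuinely diverge is in establishing the prerequisite $\chi(\lambda_0)\in\partial M_1$. The paper takes a sequence of \emph{indifferent} parameters $\lambda_m\to\lambda_0$, so that each $\chi(\lambda_m)$ already lies in $\partial M_1$ (the straightened maps carry non-persistent indifferent cycles) and the limit is in $\partial M_1$ before Proposition~\ref{7} is ever invoked; this presupposes that indifferent parameters accumulate at every point of $\partial M_f$, which the paper asserts without justification. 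You instead take $\mu_n\in\Lambda\setminus M_f$ (such sequences certainly exist, since $M_f$ is closed in $\Lambda$) and sandwich the limit: it lies in $\overline{\C\setminus M_1}$ because the extended $\chi$ maps $\Lambda\setminus M_f$ into $\C\setminus M_1$, and in $M_1$ because the limiting quasiconformal conjugacy transports the connected set $K_{\lambda_0}$. This is more self-contained, but it leans on an identification the paper leaves implicit: that for $\mu_n\notin M_f$ the tubing-defined value $\chi(\mu_n)=\Phi^{-1}\circ T_{\mu_n,n-1}^{-1}(c_{\mu_n})$ coincides with the parameter of the straightening produced by the Section~3 surgery (true by construction, since the surgery glues in the external dynamics precisely via the tubing, but worth a sentence). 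The closing reduction --- boundedness of $\chi(\lambda_n)$ from precompactness, every subsequential limit equal to $\chi(\lambda_0)$, hence continuity --- matches the paper, and the technical point you flag at the end (transporting the $\Psi_n$ to a fixed domain and checking that the limit is a non-constant conjugacy of parabolic-like restrictions, so that Proposition~\ref{7} is actually applicable) is real, but it is glossed over in the original as well.
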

\begin{proof}
In order to prove continuity at any point $\lambda \in \partial M_f$, we have to show that for every sequence $\lambda_n \in \Lambda$
converging to $\lambda_0 \in \partial M_f$ there exists a subsequence $\lambda_{n^*}$ such that $B_{n^*}= \chi(\lambda_{n^*})$
converges to $B_0=\chi(\lambda_0) \in \partial M_1$.
Let us start by proving that $B_0 \in \partial M_1$.

Let $\lambda_m$ be a sequence of indifferent parameters
converging to $\lambda_0$. Hence there exists a sequence $B_m= \chi(\lambda_m)$ such that, for each $m$, $f_{\lambda_m}$ is hybrid
conjugate to $P_{A_m}$ by some quasiconformal map $\phi_m$. The sequence $\phi_m$ is a sequence of quasiconformal maps with locally
bounded dilatation (see Remark \ref{locboundil}), hence it is precompact
in the topology of uniform convergence on compact subsets of $U\l0$ (see \cite{Ly}).
Therefore there exists a subsequence $\phi_{\lambda_{m^*}}$ which converges to 
some quasiconformal limit map $\widetilde{\phi}$,
which conjugates $f\l0$ to some $P_{\widetilde A}$, so $B_{m^*}\rightarrow \widetilde B$.
For all $m$ the map $f_{\lambda_m}$ has an indifferent periodic point,
hence $P_{A_m}$ has an indifferent periodic point,
thus $B_m \in \partial M_1$ and finally $\widetilde B\in \partial M_1$.
Since the map $f\l0$ is hybrid conjugate to $P_{A_0}$ and quasiconformally
conjugate to $P_{\widetilde A}$, and $\widetilde B \in \partial M_1$, by Prop.\ref{7} $B_0= \widetilde B\in \partial M_1$.

Let now $\lambda_n \in \Lambda$ be a sequence converging to $\lambda_0 \in \partial M_f$.
Since the sequence $\phi_n$ is precompact, there exists a
subsequence $(\lambda_n^*)$ such 
that $\phi_{\lambda_n^*}$ converges to some limit map $\widehat{\phi}$ which 
conjugates $f\l0$ to $P_{\widehat A}=\widehat{\phi} \circ f\l0 \circ \widehat{\phi}^{-1},$ so  $B_{n^*}\rightarrow \widehat B$.
Finally, since $B_0 \in \partial M_1$ and $f\l0$ is quasiconformally conjugate to both $P_{\widehat A}$ and $P_{A_0}$, by Prop.\ref{7} $\widehat B=B_0$.

\end{proof}

\subsection{Analicity of $\chi$ on the interior of $M_f$}\label{anal}
The proof of the analycity of the map $\chi$ on the interior of $\mathring{M_f}$ (see \ref{an})
follows the proof Lyubich gave in the setting
of polynomial-like mappings (see \cite{Ly}). We will prove that the map $\chi$ is holomorphic on hyperbolic components
first, and then on queer components.
To prove that $\chi$ is holomorphic on queer components, we first need the following Proposition.
\begin{prop}\label{invariantline}
Let $Q$ be a queer component of $\mathring{ M_1}$. Then for every
$A \in Q$, $J_{P_A}$ admits an invariant Beltrami form with positive support. In particular,
$\mbox{area }J(P_A ) > 0$.
\end{prop}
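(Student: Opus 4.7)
The plan is to follow the classical Mañé–Sad–Sullivan / Sullivan / McMullen scheme (the same approach Lyubich adopts in the polynomial-like setting) and to deduce the existence of an invariant Beltrami form on $J(P_{A_0})$ with positive support by comparing the dimension of the quasiconformal conjugacy class of $P_{A_0}$ with the $1$-dimensional slice $Per_1(1)$.

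Step 1 (structural stability over $Q$). Fix $A_0 \in Q$. Since $Q \subset \mathring{M_1}$, the analogue of Proposition \ref{neutparametervalues} applied to $Per_1(1)$ (viewed as a family of parabolic-like maps) shows that $Q$ contains no indifferent cycles beyond the persistent parabolic fixed point at $\infty$; moreover any critical–periodic collision would push the parameter into $\partial M_1$. Therefore $(P_A)_{A \in Q}$ is $J$-stable in the Mañé–Sad–Sullivan sense. Applying Słodkowski's $\lambda$-lemma and upgrading the motion on the parabolic basin of $\infty$ by means of parabolic Fatou coordinates (which depend holomorphically on $A$), one obtains a holomorphic family $\phi_A: \widehat{\C} \to \widehat{\C}$ of quasiconformal conjugacies with $\phi_{A_0} = \mathrm{id}$ and $\phi_A \circ P_{A_0} = P_A \circ \phi_A$.

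Step 2--3 (dimension count and contradiction). Assume toward a contradiction that $J(P_{A_0})$ admits no invariant Beltrami form with positive support. Since $A_0 \in Q$ is non-hyperbolic, the Fatou set of $P_{A_0}$ consists only of the parabolic basin of $\infty$ (a single grand orbit, by Sullivan's No Wandering Domains theorem); its quotient is the Ecalle cylinder, whose Teichmüller space is one-complex-dimensional. Hence the deformation space $\mathrm{Def}(P_{A_0})$ of $P_{A_0}$-invariant Beltrami forms has complex dimension $1$, and the quasiconformal conjugacy class $\mathrm{qc}(P_{A_0})$ is a $1$-complex-dimensional analytic subvariety of the $2$-complex-dimensional space $\mathrm{Rat}_2/\mathrm{M\"ob}$. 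But $Per_1(1)$ is also a $1$-complex-dimensional analytic curve in $\mathrm{Rat}_2/\mathrm{M\"ob}$, and Step 1 places the open set $Q \subset Per_1(1)$ entirely inside $\mathrm{qc}(P_{A_0})$. Two irreducible $1$-dimensional analytic sets agreeing on a nonempty open set must coincide, so $Per_1(1) \subset \mathrm{qc}(P_{A_0})$; this is absurd since $Per_1(1)$ contains hyperbolic parameters whose maps carry attracting cycles, a qc invariant, and hence cannot be qc-conjugate to the non-hyperbolic $P_{A_0}$.

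Main obstacle and area conclusion. The technically delicate step is the dimension count: one must carefully identify the Teichmüller moduli contributed by the parabolic basin (a single complex modulus coming from the Ecalle cylinder, pinned down within $Per_1(1)$ by the normalization of critical points at $\pm 1$) and relate $\mathrm{Def}(P_{A_0})$ with the qc-conjugacy class via the quotient by the finite group of qc-automorphisms of $P_{A_0}$. Once the existence of an invariant Beltrami form $\mu$ on $J(P_{A_0})$ with positive support is established, the positive-area conclusion is automatic: the $L^\infty$-representative of $\mu$ is non-zero on a set of positive Lebesgue measure contained in $J(P_{A_0})$, so $\mathrm{area}(J(P_{A_0})) > 0$. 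For arbitrary $A \in Q$, the pushforward $(\phi_A)_* \mu$ provides a $P_A$-invariant Beltrami form on $J(P_A)$ with positive support, completing the proof.
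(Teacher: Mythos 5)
Your Step 1 is essentially the construction the paper itself uses: a conjugacy on the parabolic basin built from Fatou coordinates (which depend holomorphically on $A$), extended to the whole basin by lifting, and then to $\widehat{\C}$ by the $\lambda$-lemma using that $J_{P_{A_0}}$ is nowhere dense. Where you diverge is in how you extract the invariant line field, and there your key quantitative claim is wrong. The grand-orbit quotient of the parabolic basin is the Ecalle cylinder $\C/\Z\cong\C^*$ with \emph{one} additional marked point coming from the critical grand orbit, i.e.\ a thrice-punctured sphere, whose Teichm\"uller space is a point. A parabolic basin containing one critical grand orbit contributes \emph{zero} moduli, not one; you appear to have imported the count for an attracting basin, whose quotient is a once-marked torus and which does carry the one complex modulus (the multiplier). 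So if $J_{P_{A_0}}$ carried no invariant Beltrami form of positive support, $P_{A_0}$ would be quasiconformally \emph{rigid}, and the contradiction with Step 1 (the qc class contains the open set $Q$) is immediate --- your intersection-of-two-curves argument, and the appeal to the nontrivial fact that the qc class is an analytic subvariety of $\mathrm{Rat}_2/\mathrm{M\ddot{o}b}$ of dimension $\dim\mathrm{Def}$, are both unnecessary and, as written, rest on the incorrect dimension.

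The paper avoids all of this machinery. Having built the motion $\tilde\eta_A$, which is \emph{conformal} on the basin $\widehat{\C}\setminus K_{P_0}$, it sets $\mu_A=(\tilde\eta_A)^*\mu_0$; this agrees with $\mu_0$ off $J_{P_0}$, and if it agreed with $\mu_0$ everywhere then Weyl's lemma would make $\tilde\eta_A$ a M\"obius conjugacy between $P_{A_0}$ and every $P_A$ with $A\in Q$, contradicting the fact that the conformal invariant $B=1-A^2$ (the multiplier of the free fixed point) is nonconstant on $Q$. Hence $\mu_A\neq\mu_0$ on a positive-measure subset of $J_{P_0}$, which gives both the invariant Beltrami form with positive support and $\mbox{area}\,J(P_{A_0})>0$. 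Your closing remark --- transporting the line field to any other $A\in Q$ by the motion --- matches the paper and is fine. I would recommend replacing your Steps 2--3 by this direct Weyl's-lemma argument, or, if you wish to keep the deformation-theoretic language, correcting the count to zero moduli and quoting the resulting rigidity statement precisely.
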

\begin{proof}
Choose $B_0 \in Q$ and set $P_0=P_{A_0}$.
Let us start by proving that there exists a dynamical holomorphic motion 
$\tilde\eta_A:Q \times\widehat{\C} \rightarrow \widehat{\C}$ with base point $A_0$, holomorphic in $z$.

Let $\Xi^0$ be an attracting petal of $P_0$ containing the critical value $z=2$, and let $\Phi_0: \Xi^0 \rightarrow
\mathbb{H}_r$ be the incoming  Fatou coordinates of $P_0$ normalized by $\Phi_0(2)=1$. For $A \in Q$, let
$\Xi^A$ be an attracting petal of $P_A$ and let $\Phi_A: \Xi^A \rightarrow \mathbb{H}_r$ be the incoming  Fatou
coordinates of $P_A$ with $\Phi_A(2+A)=1$.
The map $\eta_A = \Phi_A^{-1} \circ\Phi_0: \Xi^0 \rightarrow \Xi^A$ is a conformal conjugacy between $P_0$ and $P_A$.
Defining $\Xi^0_{-n},\, n>0$ as the connected component of 
$P_0^{-n}(\Xi^0)$ containing $\Xi^0$, and $\Xi^A_{-n},\, n>0$ as the connected component of
$P_A^{-n}(\Xi^A)$ containing $\Xi^A$, we can lift the map $\eta$ to $\eta_n: \Xi^0_{-n} \rightarrow
\Xi^A_{-n}$.
Since $K_{P_0}$ and $K_{P_A}$ are connected (where $K_{P_0}$ and $K_{P_A}$ are the
complements of the basin of attraction of the parabolic fixed point for $P_0$ and $P_A$ respectively),
by interated lifting we can extend  $\eta_n$ to % a holomorphic conjugacy
$\eta_A:\widehat{\C} \setminus K_{P_0} \rightarrow \widehat{\C} \setminus K_{P_A}$.
The map $\eta_A$ is a holomorphic conjugacy
between $P_0$ and $P_A$, and since the family $Per_1(1)$
is a family of holomorphic maps depending holomorphic on the parameter and so Fatou coordenates depend holomorphically on the parameter, the family $(\eta_A)_{A \in Q}$ depends
holomorphically on the parameter. Hence $\eta_A: Q \times \widehat{\C} \setminus K_{P_0} \rightarrow \widehat{\C} \setminus K_{P_A}$
is a dynamical holomorphic motion with base point $A_0$, holomorphic in $z$.
%Consider now the restriction $\tilde \eta_A: Q \times \widehat{\C} \setminus K_{P_0} \rightarrow \widehat{\C} \setminus K_{P_A}$.
Since for every $A \in Q$ all the periodic points of $P_A$ (but the parabolic fixed point)
are repelling, $K_A$ is nowhere dense. Hence $K_{P_0}= J_{P_0}$ and thus by the $\lambda$-Lemma we can extend $\eta_A$ to 
$\tilde \eta_A:Q \times \widehat{\C} \rightarrow \widehat{\C}$. Note that $\tilde \eta_A$ still conjugates dynamics, and it is conformal on
$\widehat{\C} \setminus K_{P_0}$.

Define $\mu_A=(\tilde \eta_A)^*\mu_0$. By construction, $\mu_A=\mu_0$ on $\widehat{\C} \setminus K_{P_0}$. On the other hand,
if $\mu_A = \mu_0$ on $\widehat{\C}$, by the Weyl's Lemma for every $A \in Q$ the map $\tilde \eta_A$ is holomorphic, hence
for all $A \in Q$ the maps $P_A$ are conformally equivalent. Therefore
$\mu_A \neq \mu_0$ on $K_{P_0}$, and thus $\mbox{area}(\mbox{supp} \mu_A)>0$ on $K_{P_0}$. In particular, this implies
$\mbox{area}(J_{P_0})>0$.
\end{proof}

\begin{prop}\label{an}
The map $\chi: \Lambda \rightarrow \C$ depends analytically on $\lambda$ for $\lambda
 \in \mathring{M_f}$.
\end{prop}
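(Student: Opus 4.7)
The plan is to follow Lyubich's strategy (\cite{Ly}) by decomposing $\mathring{M_f}$ into its hyperbolic components (where the critical orbit of $f\ll$ is attracted to a periodic cycle) and its queer components (all other components), and proving holomorphicity of $\chi$ separately on each type. Combined with continuity from Prop.~\ref{conti}, this will give analyticity on $\mathring{M_f}$.

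On a hyperbolic component $H$, fix $\lambda_0\in H$ with attracting cycle $\zeta_{\lambda_0}$ of period $k$ and multiplier $\rho_0\neq 1$. The Implicit Function Theorem applied to $f_\lambda^k(z)-z=0$ yields a holomorphic continuation $\zeta_\lambda$ on $H$, with holomorphic multiplier $\rho(\lambda)=(f_\lambda^k)'(\zeta_\lambda)$. Since the hybrid conjugacy $\phi\ll$ straightening $f\ll$ to $P_{A(\lambda)}$ is holomorphic on $\mathring{K_\lambda}$, it sends $\zeta_\lambda$ to an attracting cycle of $P_{A(\lambda)}$ with the same multiplier $\rho(\lambda)$. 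The multiplier of the attracting cycle is a biholomorphism from each hyperbolic component of $M_1\setminus\{1\}$ onto $\D$ (standard for $Per_1(1)$), so $A(\lambda)$—and hence $\chi(\lambda)=1-A(\lambda)^2$—is holomorphic on $H$.

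On a queer component $Q$ no attracting cycle exists, and I instead use a deformation argument. Fix $\lambda_0\in Q$. By MSS theory there is a dynamical holomorphic motion $h: Q\times J_{\lambda_0}\to \C$ of the Julia set conjugating dynamics, and by Slodkowski it extends to a holomorphic motion $\tilde h: Q\times\hat\C\to\hat\C$. Transporting $\nu_\lambda:=(\tilde h_\lambda)^*\sigma_0$ through the straightening $\phi_{\lambda_0}$ yields a holomorphic family of $P_{A(\lambda_0)}$-invariant Beltrami forms, which is non-trivial on $J_{P_{A(\lambda_0)}}$ thanks to Proposition~\ref{invariantline}. Integrating with the Ahlfors--Bers theorem in holomorphic parameters, and normalizing to fix $\infty$, $1$ and $-1$, I obtain a holomorphic family $\Psi_\lambda$ of quasiconformal homeomorphisms such that $\Psi_\lambda\circ P_{A(\lambda_0)}\circ\Psi_\lambda^{-1}=P_{A'(\lambda)}$ with $A'(\lambda)$ holomorphic. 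The composition $\Psi_\lambda^{-1}\circ\phi\ll\circ \tilde h_\lambda$ is a quasiconformal conjugacy between $P_{A(\lambda_0)}$ and $P_{A'(\lambda)}$ that is hybrid on $K_{P_{A(\lambda_0)}}$ (its Beltrami coefficient vanishes there by construction), so the uniqueness clause of the Straightening Theorem—applicable because $\lambda\in M_f$—forces $A'(\lambda)=A(\lambda)$. Hence $\chi$ is holomorphic on $Q$.

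The main obstacle is the queer case. The identification $A'(\lambda)=A(\lambda)$ hinges on two delicate points: the non-triviality of the transported Beltrami family, without which the integrating maps $\Psi_\lambda$ would not yield a genuinely deformed family of $P_A$'s (this is exactly where Proposition~\ref{invariantline} is used), and the verification that $\Psi_\lambda^{-1}\circ\phi\ll\circ\tilde h_\lambda$ is conformal on the interior of $K_{P_{A(\lambda_0)}}$, so that uniqueness of straightening can be applied. The hyperbolic case, by contrast, is essentially a one-step multiplier argument via the Implicit Function Theorem.
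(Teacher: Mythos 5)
Your proposal is correct and follows essentially the same route as the paper: the multiplier/Implicit Function Theorem argument on hyperbolic components, and on queer components a holomorphically varying family of invariant Beltrami forms supported on the Julia set (with Proposition~\ref{invariantline} invoked at the same point), integrated with holomorphic dependence on the parameter to produce a holomorphic family of maps in $Per_1(1)$. The only substantive differences are cosmetic --- the paper obtains the dynamical holomorphic motion by lifting the motion of the fundamental annulus and applying the $\lambda$-Lemma (using that $K_{\lambda}$ is nowhere dense) rather than by moving the Julia set via MSS/repelling cycles, and it concludes by exhibiting the hybrid conjugacy directly rather than via the uniqueness clause --- but do fix your final composition: as written, $\Psi_{\lambda}^{-1}\circ\phi_{\lambda}\circ\tilde h_{\lambda}$ does not compose consistently, since $\phi_{\lambda}$ lands in the dynamical plane of $P_{A(\lambda)}$ while $\Psi_{\lambda}^{-1}$ is defined on that of $P_{A'(\lambda)}$, which presupposes the identity you are trying to prove; the map you want is $\Psi_{\lambda}\circ\phi_{\lambda_0}\circ\tilde h_{\lambda}^{-1}$, a hybrid conjugacy from $f_{\lambda}$ to $P_{A'(\lambda)}$.
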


\begin{proof}
Let us start by proving that, for every hyperbolic component $P \subset \mathring{M_f}$,
there exists a hyperbolic component $Q \subset \mathring{M_1}$
such that $\chi_{|P}: P \rightarrow Q$ is a holomorphic map.
By the Implicit Function Theorem and Prop. \ref{neutparametervalues},
all the parameter values in $P$ are hyperbolic.
Hence for $\lambda \in P$, $f\ll$ has an attracting cycle, thus $P_{A\ll}= \phi\ll \circ f\ll \circ (\phi\ll)^{-1}$
has an attracting cycle and $Q \subset \mathring{M_1}$. Since $\phi\ll$ is conformal on $\mathring{K\ll}$, calling
$\rho_P,\,\,\rho_Q$ the multiplier maps on $P,\,\,Q$ respectively, $\rho_P(\lambda)= \rho_Q(A\ll)$.
Hence on $P$ we can write the map $\chi$ as $\chi= \rho_Q^{-1} \circ \rho_P$.
Since $f\ll$ is holomorphic in both $\lambda$ and $z$, and by the Implicit Function Theorem the
attracting cycle moves holomorphically on $P$,
the map $\rho_P$ is holomorphic. For the same reason, $\rho_Q$ is holomorphic as well.
Since $\rho_Q$ has degree $1$ (see \cite{PT}), it is conformal and then $\chi$ is holomorphic.

Let now $C$ be a queer component in $\mathring{M_f}$, and let $\lambda_0 \in C$. Since $C \subset M_f$
we can lift the holomorphic motion $\widehat\tau\ll: A\l0 \rightarrow A\ll$ constructed in \ref{holmot}
to $\tau\ll : U\l0 \setminus K\l0 \rightarrow U\ll \setminus K\ll$ (as we did for the holomorphic Tubing, see \ref{Li}).
Since for all $\lambda \in C$,
$K\ll$ is nowhere dense, by the $\lambda$-Lemma we can extend $\tau\ll$ to
a dynamical holomorphic motion $\tau\ll: U\l0 \rightarrow U\ll$.
Let $P_{A\l0}$ be the member of the family $Per_1(1)$ hybrid conjugate to $f\l0$, let $\phi\l0$ be a hybrid conjugacy
between them and set $K_0=K_{P_{A\l0}}$. 
Note that, for all $\lambda \in C$, the map $\tau\ll \circ \phi\l0^{-1}:\phi\l0(U\l0) \rightarrow U\ll$
is a quasiconformal conjugacy between $P_{A\l0}$ and $f\ll$.
Define on $\widehat \C$ the following family of Beltrami forms:
$$\nu\ll(z):= \left\{
\begin{array}{cl}
(\tau\ll \circ \phi\l0^{-1})^* \mu_0 &\mbox{on } K_0 \\
\mu_0 &\mbox{on } \widehat \C \setminus K_0\\
\end{array}\right.
$$
The family $\nu\ll$ is a family of $P_{A\l0}$-invariant Beltrami forms depending holomorphically on $\lambda$.
By Prop. \ref{invariantline}, for every $\lambda \in C$, on $K_0$ $\mbox{area}(\mbox{supp} \nu\ll)>0$.
Let $\psi\ll: \widehat \C \rightarrow \widehat \C$ be the family of integrating maps fixing
$-1,\,0$ and $\infty$, then the family $P_{A(\lambda)}= \psi\ll \circ P_{A\l0} \circ (\psi\ll)^{-1}$% consists of
has the form $P_{A(\lambda)} (z)= z + 1/z + A(\lambda)$, where $ A(\lambda)$ depends holomorphically on the parameter.
 Finally, for every $\lambda \in C$, the map $\psi\ll \circ \phi\l0 \circ \tau\ll^{-1}$ is a hybrid
conjugacy between $f\ll$ and $P_{A(\lambda)}$, hence $P_{A(\lambda)}$ is the straightening of $f\ll$ and
the map $\chi_{|C}$ is holomorphic.
\end{proof}

\subsection{Discreteness of fibers}\label{d}
Set $\mathcal{B}= \chi(\Lambda)$.
\begin{prop}\label{discretness}
 For every $B \in \mathcal{B}$, $\chi^{-1}(B)$ is discrete.
\end{prop}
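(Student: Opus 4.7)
Plan. I argue by contradiction: suppose $\chi^{-1}(B_0)$ has an accumulation point $\lambda_0\in\Lambda$. By continuity of $\chi$, $\chi(\lambda_0)=B_0$, and I split into cases on where $\lambda_0$ sits.

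If $\lambda_0\in\mathring{M_f}$, let $C$ be its component; Proposition \ref{an} gives $\chi|_C$ holomorphic, so the identity principle yields $\chi\equiv B_0$ on $C$. On a hyperbolic $C$ this is impossible, since the formula $\chi|_C=\rho_Q^{-1}\circ\rho_P$ from the proof of Proposition \ref{an} exhibits $\chi$ as a composition of two conformal maps, hence non-constant. On a queer $C$, constancy of $\chi=1-A(\lambda)^2$ forces $A(\lambda)\equiv A(\lambda_0)=:A_0$, so the family $\psi_\lambda$ of integrating maps from the proof of Proposition \ref{an} would be a holomorphic family of M\"obius self-conjugacies of $P_{A_0}$; the normalization fixing $-1,0,\infty$ pins $\psi_\lambda\equiv\mathrm{Id}$, whence $\nu_\lambda\equiv\mu_0$ on $\widehat{\C}$. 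This directly contradicts Proposition \ref{invariantline}, which supplies $\nu_\lambda$ with $\mathrm{supp}(\nu_\lambda-\mu_0)\cap K_0$ of positive area.

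If $\lambda_0\in\Lambda\setminus M_f$, let $V$ be its component; since $\chi$ is quasiregular on $V$ (Section \ref{external}), the planar discrete-open theorem says $\chi|_V$ is either constant or has discrete fibers. Accumulation at $\lambda_0\in V$ forces $\chi|_V\equiv B_0$, and continuity out to $\partial V\cap\partial M_f$ combined with Proposition \ref{conti} places $B_0\in\partial M_1$; this contradicts $B_0\in\chi(V)\subset\C\setminus M_1$. If finally $\lambda_0\in\partial M_f$, then by Proposition \ref{conti} $B_0\in\partial M_1$, and the open-mapping property on each component of $\mathring{M_f}$ (using the non-constancy obtained in the first case) gives $\chi(\mathring{M_f})\subset\mathring{M_1}$; hence the accumulating sequence lies entirely in $\partial M_f$. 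I would close this case by extending $\chi$ to be quasiregular on a neighborhood of $\lambda_0$ in $\Lambda$ via a removable-singularity argument---the a.e.\ dilatation is locally bounded, coming from the $K=1$ behavior on $\mathring{M_f}$ and the locally bounded dilatation of $\chi|_{\Lambda\setminus M_f}$ noted in Section \ref{external}---and then apply the discrete-open theorem as in the second case.

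The main obstacle is the boundary case $\lambda_0\in\partial M_f$: ruling out accumulation within $\partial M_f$ requires establishing global quasiregularity of $\chi$ across $\partial M_f$ through a removability result, which in full generality is delicate because $\partial M_f$ may be large in measure. The queer-component step in the first case is also technically subtle, but there the argument is cleanly powered by the non-triviality of the invariant Beltrami forms from Proposition \ref{invariantline} together with the rigidity of M\"obius self-conjugacies under the three-point normalization.
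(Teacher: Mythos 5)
The decisive case --- accumulation of the fiber at a point $\widehat\lambda\in\partial M_f$ --- is exactly the one you leave open, and the fix you propose does not work. You suggest extending $\chi$ quasiregularly across $\partial M_f$ by a removable-singularity argument, but $\partial M_f$ need not be small (queer phenomena can give it positive area), you have no a priori Sobolev regularity of $\chi$ across it, and the locally bounded dilatation on $\Lambda\setminus M_f$ says nothing about the boundary set itself; no removability theorem applies. The paper handles this case by an entirely different mechanism, following Lyubich's Lemma~10.13: for a sequence $\lambda_n\to\widehat\lambda$ in the fiber one builds hybrid equivalences $\alpha_{\lambda_n}$ between $f_{\widehat\lambda}$ and $f_{\lambda_n}$, realizes the associated Beltrami forms as a family $\mu_\lambda$ depending holomorphically on $\lambda$ (via the holomorphic motion $\tau_\lambda$ rebased at $\widehat\lambda$), integrates with a three-point normalization to obtain a holomorphic family $G_\lambda=H_\lambda\circ f_{\widehat\lambda}\circ H_\lambda^{-1}$ of parabolic-like maps, every one of which is hybrid equivalent to $f_{\widehat\lambda}$ and hence has connected filled Julia set, and checks that $\alpha_{\lambda_n}=H_{\lambda_n}$, so $G_{\lambda_n}=f_{\lambda_n}$ on open sets $O_n$. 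The identity principle in $\lambda$ then forces $G_\lambda=f_\lambda$ on a product neighborhood $\Lambda_*\times V$, which is absurd since $\widehat\lambda\in\partial M_f$ guarantees parameters in $\Lambda_*$ with $K_\lambda$ disconnected. This construction is the heart of the paper's proof and is absent from yours.

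The queer-component step in your interior case is also flawed. From $A(\lambda)\equiv A_0$ you conclude that the integrating maps $\psi_\lambda$ are M\"obius and therefore the identity; but $\psi_\lambda$ is only a quasiconformal self-conjugacy of $P_{A_0}$ fixing $-1$, $0$, $\infty$, and on a queer component such self-conjugacies are typically \emph{not} conformal --- that is precisely the content of Proposition \ref{invariantline} (a nontrivial invariant Beltrami form supported on the Julia set integrates to a non-M\"obius quasiconformal conjugacy). So no contradiction with Proposition \ref{invariantline} arises, and constancy of $\chi$ on a queer component is not excluded by your argument. The paper does not exclude it directly either: it simply replaces $\widehat\lambda$ by a boundary point of the component and runs the $\partial M_f$ argument, and the same reduction also makes your unproved claim that $\rho_P$ is non-constant on hyperbolic components unnecessary. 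In short, every interior subcase can be funneled into the boundary case, and the boundary case requires the pullback-and-identity-principle argument you are missing.
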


\begin{proof}
This follows the proof of Lemma $10.13$ in \cite{Ly}.
Let us assume there exists a $B \in  \mathcal{B}$ such that
there exists a sequence $\lambda_n \in \chi^{-1}(B)$ and $\lambda_n \rightarrow \widehat \lambda \in \chi^{-1}(B)$.
The map $\chi$ is quasiregular on $\Lambda \setminus M_f$ and holomorphic
on $\mathring{M_f}$, hence $\widehat \lambda \in \partial M_f$
(or $\widehat \lambda$ in a queer component of $\mathring {M_f}$ for which $\chi$ is constant, and then we can replace $\widehat \lambda$
with a boundary point).

Note that, for every $n$, the maps $f_{\lambda_n}$ are hybrid equivalent to $f\wwl$ by some hybrid equivalence
$\beta_{\lambda_n}$. Let us assume that for all $\lambda$ in a neighborhood of $\wl$,
$f_{\lambda}^{-1}(\widetilde \Delta_{\lambda}) \subset \Delta_{\lambda}$
(in other case, take a nice analytic family of parabolic-like restrictions for which the assumption
holds).
For every $\lambda$, call $z\ll$ the parabolic fixed point of $f_{\lambda}$, $c\ll$ its critical
point and $v\ll$ its critical value.
Consider $\wl$ as the base point of a holomorphic motion $\widehat \tau: \Lambda \times  A\wwl \rightarrow A\ll$,
extend it by the Slodkowski's
Theorem and then restrict it to a holomorphic motion
$\tau: \Lambda \times \widehat \C \setminus \widetilde\Omega\wwl' \rightarrow \widehat \C \setminus \widetilde\Omega\ll'$.
Define for every $n$ the map $\alpha_{\lambda_n} : \widehat \C \rightarrow \widehat \C$ as follows:

$$\alpha_{\lambda_n}(z):= \left\{
\begin{array}{cl}
\tau_{\lambda_n} &\mbox{on } \widehat\C \setminus \widetilde\Omega\wwl'\\
 \widetilde f_{\lambda_n}^{-n} \circ \tau_{\lambda_n} \circ \widetilde f\wwl^n &\mbox{on } A_{\wl,n} \\
\beta_{\lambda_n} &\mbox{on  } K\wwl\\
\end{array}\right.
$$
where the maps $\widetilde  f_{\lambda}$ are as in \ref{holmot} and the sets $A_{\wl,n}$ are
constructed in \ref{Li}.
The proof of Prop. 6.4 in \cite{LL} shows that, for every $n$, the map $\alpha_{\lambda_n}$ is
continuous and hence quasiconformal. Therefore, for every $n$,
$\alpha_{\lambda_n}$ restricts to a hybrid equivalence between $f\wwl$ and $f_{\lambda_n}$.
Consider on $\widehat \C$ the family of Beltrami forms $\nu_{\lambda_n}=(\alpha_{\lambda_n})^*\mu_0$.
Note that trivially $\alpha_{\lambda_n}$ integrates $\nu_{\lambda_n}$, and for
some subsets $O_n$ of $\widetilde\Omega'_{\lambda_n}$ and $O$ of $\widetilde\Omega'_{\wl}$,
$(f_{\lambda_n})_{|O_n}= \alpha_{\lambda_n} \circ f\wwl \circ (\alpha_{\lambda_n})^{-1}_{|O}$.

On the other hand,
define on $\widehat \C$ the family of Beltrami forms $\mu\ll$ as follows:
$$\mu\ll(z):= \left\{
\begin{array}{cl}
\mu_{\lambda,0}=(\tau\ll)^* \mu_0 &\mbox{on } \widehat \C \setminus \widetilde\Omega\wwl' \\
(\widehat{f}_{\widehat\lambda}^n)^*\mu_{\lambda,0} &\mbox{on } \widehat A_{\wl,n}\\
0 &\mbox{on } K_{\wl}\\
\end{array}\right.
$$
where for every $\lambda$ the map $ \widehat{f}_{\widetilde\lambda}$ which defines the sets $\widehat A_{\wl,n}$ and
spreads $\mu\ll$ is defined as follows:
 $$ \widehat{f}_{\widetilde\lambda}(z)=\left\{
\begin{array}{cl}
\tau\ll^{-1} \circ f_{\lambda_n} \circ \tau\ll &\mbox{on } \tau\ll^{-1}(f\ll^{-1}(\widetilde{\Delta_{\lambda}}))\\
f_{\widehat\lambda}  &\mbox{on  }  \widetilde\Omega_{\widehat\lambda}'\\
\end{array}\right.
$$ 
Note that $ \widetilde{f}_{\widetilde\lambda}$ and $ \widehat{f}_{\widetilde\lambda}$ coincide
on $\widetilde\Omega\wwl'$, hence for every
$n,( \bigcup_n A_{\wl,n})\cap \widetilde\Omega\wwl'=( \bigcup_n\widehat A_{\wl,n})\cap \widetilde\Omega\wwl'$.
Therefore, for all $n$, $\mu_{\lambda_n}=\nu_{\lambda_n}$.

The family $\mu_{\lambda}$ depends holomorphically on $\lambda$,
because $\tau\ll$ is a holomorphic motion, on $\Delta\wwl$ it is constant and
on $\widetilde\Omega'\wwl$ it is spread by the dynamics
of $f\wwl$ (which does not
depends on $\lambda$).

Let $H\ll: \widehat \C \rightarrow \widehat \C$ be the holomorphic family of integrating maps
mapping $z\wwl$ to $z\ll$, $c\wwl$ to $c\ll$ and $v\wwl$ to $v\ll$.
The family $G_{\lambda}=H\ll \circ f\wwl \circ  H\ll^{-1}: H\ll(U'\wwl) \rightarrow H\ll(U\wwl)$
is a holomorphic family of parabolic-like mappings hybrid equivalent to $f\wwl$.
For all $n$, $\alpha_{\lambda_n}=H_{\lambda_n}$ (since they are solutions of the same family of Beltrami form and
they coincide on $3$ points), and therefore on the $O_n$,
$G_{\lambda_n}(z)=f_{\lambda_n}(z)$.

Choose a neighborhood $\Lambda_*$ of $\wl$ in $\Lambda$ and an open set $V$ in $\cap_n O_n$
such that the maps $G\ll$ and $f\ll$ are well-defined in $\Lambda_* \times V$.
Since $G_{\lambda}(z)=f_{\lambda}(z)$ on $\lambda_n \times V$, and $\lambda_n$ converges to $\wl$,
$G_{\lambda}=f_{\lambda}$ on $\Lambda_*\times V$.
This is impossible, because in $\Lambda_*$ there are $\lambda$ for which $f\ll$ has disconnected Julia set
(since $\wl \in \partial M_f$), while for all $\lambda$, $G\ll$ has connected Julia set
(since it is equivalent to $f\wwl$).

\end{proof}
\section{The map $\chi: \Lambda \rightarrow \C$ is a ramified covering from the connectedness locus $M_f$ to
$M_1\setminus\{1\}$}\label{final}
In this chapter, we will first prove that for any closed and connected
subset $K$ of $\mathcal{B}= \chi(\Lambda)$, if $C= \chi^{-1}(K)$ is compact,
then $\chi_{|C}$ is a proper map of degree $d$ (Prop. \ref{th}),
and then that $\chi: C \rightarrow K$ is a $d$-fold branched covering (Prop. \ref{cov}).
Finally, we will prove that, under certain conditions (Def. \ref{properfam}), for every
neighborhood $U$ of the root of $M_1$ (without specifications, we consider a neighborhood open),
the set $\chi^{-1}(M_1 \setminus U)$ is compact in $\Lambda$.
This implies Theorem \ref{bigthm}.

Denote by $i_{\lambda}(\chi)$ the local degree of $\chi$ at $\lambda$. Note that,
since $\chi: \Lambda \rightarrow \mathcal{B}$ is quasiregular on $\Lambda \setminus M_f$ and holomorphic on $\mathring{M_f}$,
for all $\lambda \in \Lambda$, $i_{\lambda}(\chi)>0$.

\begin{prop}\label{th}
 Let $K$ be a closed and connected subset of $\mathcal{B}$, and $C= \chi^{-1}(K)$. If $C$ is compact, then
 there exist neighborhoods $\hat V$ of $K$ in $\mathcal{B}$ and $\hat U$ of $C$ in $\Lambda$ such that
 $\chi: \hat U \rightarrow \hat V$ is a proper map of degree $d$, 
 where, for every $y \in K$, $d = \sum_{x \in \chi^{-1}(y)} i_x(\chi)$. 
\end{prop}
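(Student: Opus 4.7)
The plan is to enclose $C$ in an open set $\hat U \subset\subset \Lambda$, take $\hat V$ to be the connected component of $\mathcal{B}\setminus \chi(\partial \hat U)$ containing $K$, and then apply the standard counting formula for proper, open, discrete maps between plane domains to $\chi:\hat U\cap\chi^{-1}(\hat V)\to\hat V$. The ingredients I need are that $\chi$ is continuous on $\Lambda$ (Prop.~\ref{conti} together with Prop.~\ref{an}), open and discrete (which follows from holomorphy on $\mathring{M_f}$, quasiregularity on $\Lambda\setminus M_f$ with locally bounded dilatation, and a removability-type gluing across $\partial M_f$), and that the local degree $i_x(\chi)\ge 1$ is well-defined at every $x\in\Lambda$.

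First I would fix $y\in K$: the fiber $\chi^{-1}(y)$ lies inside the compact set $C$ and is discrete by Prop.~\ref{discretness}, hence finite, say $\{x_1,\dots,x_k\}$. Around each $x_j$ the map $\chi$ is a branched cover of local degree $i_{x_j}(\chi)$ onto a neighborhood of $y$. Second, since $C$ is compact in the open set $\Lambda$ I can pick an open $\hat U$ with $C\subset\hat U\subset\subset\Lambda$; the inclusion $C\subset\hat U$ forces $\partial\hat U\cap C=\emptyset$, i.e.\ $\partial\hat U\cap\chi^{-1}(K)=\emptyset$, so $\chi(\partial\hat U)$ is compact and disjoint from $K$. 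Then $\mathcal{B}\setminus\chi(\partial\hat U)$ is open in $\mathcal{B}$ and contains $K$; let $\hat V$ be the connected component containing the connected set $K$, and set $\hat U':=\hat U\cap\chi^{-1}(\hat V)$. For any compact $L\subset\hat V$, the set $\chi^{-1}(L)\cap\overline{\hat U}$ is compact, and since $L\cap\chi(\partial\hat U)=\emptyset$ it is also disjoint from $\partial\hat U$; hence it lies in $\hat U'$, proving that $\chi:\hat U'\to\hat V$ is proper.

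A proper, open, discrete map between plane domains onto a connected target is a branched covering of some degree $d\ge 1$, and the sum $\sum_{x\in\chi^{-1}(y)\cap\hat U'}i_x(\chi)$ equals $d$ for every $y\in\hat V$. For $y\in K$, the fiber $\chi^{-1}(y)$ is contained in $C\subset\hat U'$, so this intersection is the whole fiber, yielding the claimed formula. The step I expect to be the main obstacle is the openness of $\chi$ on all of $\Lambda$, and in particular at points of $\partial M_f$: there one has holomorphy on the inside, quasiregularity with locally bounded dilatation on the outside, and only continuity on the interface, so a removability-style argument (gluing the two quasiregular pieces across their common continuity set) is needed to conclude that $\chi$ is globally quasiregular, hence open and discrete, so that the local degree $i_x(\chi)$ and the counting formula make sense at every point. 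Once this is in place, the remainder is the proper-map bookkeeping sketched above.
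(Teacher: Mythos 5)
Your proposal is correct and follows essentially the same route as the paper: enclose the compact set $C$ in a relatively compact neighborhood, take $\hat V$ to be the connected component of $\mathcal{B}\setminus\chi(\partial N)$ containing $K$, set $\hat U=\chi^{-1}(\hat V)\cap N$, check properness, and sum local degrees over fibers using discreteness (Prop.~\ref{discretness}). The openness/quasiregularity issue at $\partial M_f$ that you flag is indeed the delicate point; the paper handles it only implicitly via the remark preceding the proposition that $i_\lambda(\chi)>0$ for all $\lambda$, so your explicit attention to it is if anything more careful than the original.
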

\begin{proof}
The proof follows the analogous one in \cite{DH}.
Let $N$ be a closed neighborhood of $C$ in $\Lambda$ with $dist(C,\partial N)>0$ (it exists since $C$ compact).
Hence $C \subset  N  \subset \Lambda$, $\chi:N \rightarrow  \chi(N)$ is proper and
$\partial K \cap \chi(\partial N) =\emptyset$.
%N closed hence compact
Call $\hat V$ the connected component of $\mathcal{B} \setminus  \chi(\partial N)$ which contains $K$, and
set $\hat U= \chi^{-1}(\hat V) \cap N$.    
Then $\chi^{-1}(\hat V) \cap \partial N= \emptyset$, hence 
the map $\chi_{|\hat U}:\hat U \rightarrow \hat V$ is proper.

Since $\chi$ is continuous and $\hat V$ is
connected, $\hat U$ is the union of connected components. Set $\hat U= \bigcup_j \hat U_j$. The
restriction $\chi: \hat U_j \rightarrow \hat V$ is then a proper map between connected sets, thus it has a degree
$d_j$, and since $\chi$ has discrete fibers, for every $v \in \hat V$
(see \cite{H} pg. 136): $$d_j=\sum_{u \in \chi^{-1}(v)\cap \hat U_j} i_u(\chi) $$ (note that $d_j >0$). 
Therefore $\chi: \hat U \rightarrow \hat V$ has a degree $d= \deg \chi_{|\hat U}= \sum_j d_j$ 
and in particular for all $y \in K,\,\,\,d= \deg \chi_{|\hat U}= \deg \chi_{|C}= \sum_{x \in \chi^{-1}(y) \cap C} i_x(\chi).$

\end{proof}
\begin{prop}\label{cov}
In the hypothesis of Prop. \ref{th}, the map $\chi_{|\hat U}:\hat U \rightarrow \hat V$ is a branched
covering of degree $d$.

\end{prop}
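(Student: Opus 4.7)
The plan is to deduce the branched-covering property from Stoïlow's theorem: a continuous, open, light (i.e.\ discrete-fibered) map between oriented topological surfaces is locally conjugate, by orientation-preserving homeomorphisms, to a power map $z\mapsto z^k$ around every point, and the exponent $k$ at a point $\lambda$ coincides with the local degree $i_\lambda(\chi)$. From Proposition \ref{th} we already have that $\chi:\hat U\to\hat V$ is proper of degree $d$ with discrete fibers, and the formula $d=\sum_{x\in\chi^{-1}(y)}i_x(\chi)$. What remains is to verify openness of $\chi$ everywhere in $\hat U$; once this is in hand, Stoïlow plus properness gives the branched-covering statement immediately.

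First I would establish openness away from the boundary of $M_f$. On $\hat U\cap\mathring{M_f}$ the map $\chi$ is holomorphic by Proposition \ref{an}, and it is nowhere locally constant by discreteness of fibers (Proposition \ref{discretness}); hence it is open there. On $\hat U\setminus M_f$ the map is quasiregular (see Section \ref{external}) with discrete fibers, and any such map is open by the Stoïlow/Chernavski theorem.

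The main obstacle is openness at points $\lambda_0\in\hat U\cap\partial M_f$, where only continuity is known a priori. I would run a winding-number argument: by discreteness of fibers, choose a small closed topological disk $D\subset\hat U$ around $\lambda_0$ with $\chi^{-1}(\chi(\lambda_0))\cap\overline D=\{\lambda_0\}$. Then $\chi(\partial D)$ is compact and avoids $\chi(\lambda_0)$, so the winding number $w$ of $\chi|_{\partial D}$ about $\chi(\lambda_0)$ is a well-defined integer. Since $\partial M_f$ has empty interior in $\hat U$, for a generic value $y$ near $\chi(\lambda_0)$ the finite fiber $\chi^{-1}(y)\cap D$ lies in $\mathring{M_f}\cup(\Lambda\setminus M_f)$, where $\chi$ is open and has positive local degree at each preimage; summing the local degrees shows $w=\sum_{x\in\chi^{-1}(y)\cap D}i_x(\chi)\ge 1$. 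Hence $\chi(D)$ contains a neighborhood of $\chi(\lambda_0)$, which gives openness at $\lambda_0$.

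With openness established, Stoïlow's theorem produces, around each $\lambda\in\hat U$, a local model $z\mapsto z^{i_\lambda(\chi)}$ up to homeomorphism; combining this local structure with properness and the degree equality of Proposition \ref{th} yields precisely that $\chi:\hat U\to\hat V$ is a ramified covering of degree $d$. The delicate part of the argument is the winding-number computation at boundary points of $M_f$, where the only prior information is continuity; the density of $\mathring{M_f}\cup(\Lambda\setminus M_f)$ in $\hat U$ and the existing local degree information on each side of $\partial M_f$ are exactly what make the computation go through.
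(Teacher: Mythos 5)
Your route is genuinely different from the paper's, and it is more explicit about where the real difficulty sits. The paper argues directly: given $y\in\hat V$ it chooses a neighbourhood $Y$ so small that every preimage of every point of $Y\setminus\{y\}$ is a regular point, decomposes $\chi^{-1}(Y)$ into components, and verifies the evenly-covered property over $Y\setminus\{y\}$; the local normal form at a branch point, and in particular the openness of $\chi$ at points of $\partial M_f$ (where only continuity is known a priori), are left implicit. You instead prove openness everywhere, invoke Sto\"{\i}low's theorem to obtain the local model $z\mapsto z^{i_\lambda(\chi)}$, and then let properness together with the degree count of Prop.~\ref{th} finish the argument. This buys a cleaner logical structure: the only nontrivial input beyond Props.~\ref{th} and \ref{discretness} is openness along $\partial M_f$, which your winding-number computation supplies, whereas the paper's "regular point" bookkeeping quietly presupposes it.

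Two details of that computation need repair, though both are fixable. First, the reason a generic $y$ near $\chi(\lambda_0)$ has fiber disjoint from $\partial M_f$ is not that $\partial M_f$ has empty interior --- the continuous image of a nowhere dense compact set need not be nowhere dense --- but rather Prop.~\ref{conti}, which gives $\chi(\partial M_f)\subset\partial M_1$; hence any $y\notin\partial M_1$ has fiber avoiding $\partial M_f$, and such $y$ are dense. Second, the conclusion $w\ge 1$ requires the fiber over the chosen $y$ to be nonempty, which is exactly the openness you are trying to prove, so it cannot be assumed. Both points are settled at once by choosing $y=\chi(\lambda')$ for some $\lambda'\in D\setminus M_f$ close to $\lambda_0$ (such $\lambda'$ exist since $\lambda_0\in\partial M_f$): then $y\notin M_1\supset\partial M_1$, so the whole fiber $\chi^{-1}(y)\cap D$ lies in $\Lambda\setminus M_f$ where $\chi$ is quasiregular with positive local index, and the fiber contains $\lambda'$. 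Degree additivity then gives $w=\sum_{x\in\chi^{-1}(y)\cap D}i_x(\chi)\ge i_{\lambda'}(\chi)\ge 1$, and openness at $\lambda_0$ follows as you describe.
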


\begin{proof}
The map $\chi_{|\hat U}:\hat U \rightarrow \hat V$ is continuous, and
by the previous proposition it is a proper surjective map of degree $d$.
Let $y \in \hat V$, and let $Y$ be a neighborhood of $y$ in $\hat V$ such that for all $x \in X= \chi^{-1}(Y)$ and
$x \neq \chi^{-1}(y)$, $x$ is a regular point (such a $Y$ exists since the fiber of $\chi$ are finite).
Hence $X= \bigcup_{j \in J} U_j$, with the $U_j$ disjoint
and $1\leq j \leq d$. If $j=d$, $y$ is a regular point, and for all $j$, $\chi_{|U_j}$ is a homeomorphism.

If $j < d$, we want to show that for every $w \in Y \setminus \{y\}$ there exists a neighborhood $W \subset Y$ of $w$
such that $\chi^{-1}(W)= \bigcup_{1 \leq j \leq d} T_j$, with the $T_j$ disjoint and $\chi_{|T_j}$ homeomorphism.
This is clear because all the points in $X$ different from the preimage of $y$ are regular points.

\end{proof}

\subsection{Proper families of parabolic-like maps}\label{p}

As we saw in \ref{anfam}, the range $\mathcal{B}$ of the map $\chi$ is not the whole of $\C$ but
a proper subset, because there is no $\lambda \in \Lambda$ for which $f\ll$ is hybrid equivalent to $P_0=z+1/z$.
Hence $M_1 \nsubseteq \mathcal{B}$. However, we could hope that $B=1$ is the only
point of $M_1$ missing from $\mathcal{B}$, or in other words, that as $B \rightarrow \partial \mathcal{B}$
$B \notin M_1$ or
$B \rightarrow 1$.
Indeed this is the case under appropriate conditions (e.g. the following one).
\begin{defi}\label{properfam}
Let $\textbf{f}=(f_{\lambda}: U_{\lambda}' \rightarrow U_{\lambda} )_{\lambda \in \Lambda} $ be a nice analytic family of 
parabolic-like maps of degree $2$, such that, for $\lambda \rightarrow \partial \Lambda$:
\begin{enumerate}
 \item $\lambda \notin M_f$ or
 \item $\chi(\lambda) \rightarrow 1$.
\end{enumerate}
Then we call $\textbf{f}$ a \textit{proper family of parabolic-like mappings}.
\end{defi}
\begin{prop}
 Let $\textbf{f}$ be a proper family of parabolic-like mappings. Then, for every $U(1)$ neighborhood of  $1$ in $\C$,
setting $K= M_1 \setminus U(1)$, the set $C=\chi^{-1}(K)$ is compact in $\Lambda$.
\end{prop}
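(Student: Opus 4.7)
The plan is to show $C$ is both closed in $\Lambda$ and bounded away from $\partial\Lambda$, which, since $\Lambda\approx\mathbb{D}$ is itself bounded, will give compactness.

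First I would observe that the map $\chi:\Lambda\to\C$ is continuous on all of $\Lambda$: it is holomorphic on $\mathring{M}_f$ by Prop.~\ref{an}, quasiregular on $\Lambda\setminus M_f$ by construction in Section~\ref{external}, and continuous on $\partial M_f$ by Prop.~\ref{conti}. Moreover, $\chi(\mathring{M}_f)\subset M_1$ and $\chi(\Lambda\setminus M_f)\subset \C\setminus M_1$, while $\chi(\partial M_f)\subset\partial M_1\subset M_1$ by Prop.~\ref{conti}. Since $K=M_1\setminus U(1)$ is a closed subset of $M_1$ (because $M_1$ is closed and $U(1)$ is open), and in particular closed in $\C$, its preimage $C=\chi^{-1}(K)$ is closed in $\Lambda$. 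Note also that $C\subseteq M_f$ since $K\subseteq M_1$.

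Next, I would show $C$ does not accumulate at $\partial\Lambda$. Suppose for contradiction there is a sequence $(\lambda_n)\subset C$ with $\lambda_n\to\lambda_\infty\in\partial\Lambda$. Because $\textbf{f}$ is a proper family, Def.~\ref{properfam} forces one of two alternatives for the tail of $(\lambda_n)$: either $\lambda_n\notin M_f$ eventually, or $\chi(\lambda_n)\to 1$. The first is impossible since $C\subseteq M_f$. The second is also impossible because $\chi(\lambda_n)\in K=M_1\setminus U(1)$, which is a closed set disjoint from the open neighborhood $U(1)$ of $1$, so $\chi(\lambda_n)$ stays bounded away from $1$. This contradiction shows that $C$ is contained in a set of positive distance from $\partial\Lambda$.

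Finally, $\Lambda\approx\mathbb{D}$ is bounded in $\C$, so any closed subset of $\Lambda$ bounded away from $\partial\Lambda$ is relatively compact in $\Lambda$ and hence compact. Combining the two previous steps yields compactness of $C$ in $\Lambda$.

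The step I expect to require the most care is the use of the proper family hypothesis: one must correctly interpret Def.~\ref{properfam} as a dichotomy that holds along every sequence approaching $\partial\Lambda$, and then exploit that $K$ is by construction bounded away from the point $1$ to rule out alternative (2). Everything else is just the continuity of $\chi$ (established earlier) combined with the topological fact that closed subsets of $\Lambda\approx\mathbb{D}$ staying away from the boundary are compact.
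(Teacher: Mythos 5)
Your argument is correct and follows essentially the same route as the paper: both proofs assume a sequence in $C$ escaping to $\partial\Lambda$ and derive a contradiction from the properness dichotomy, using that $K=M_1\setminus U(1)$ stays away from $1$ while $C\subseteq M_f$ rules out the first alternative. Your explicit verification that $C$ is closed in $\Lambda$ (via continuity of $\chi$) is a detail the paper leaves implicit, but the substance is identical.
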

\begin{proof}
 Assume $C$ is not compact in $\Lambda$. Then there exists a sequence $(\lambda_n) \in C$ such that
$\lambda_n \rightarrow \partial \Lambda$ as $n \rightarrow \infty$.
On the other hand, for all $n$, $\chi(\lambda_n) \in K$. Let $\chi(\lambda_{n_k})$ be a subsequence converging to some
parameter $B$. Since $K$ is compact, the limit point $B$ belongs to $K \subset M_1 \setminus \{1\}$. This is a
contradiction, because $\textbf{f}$ is a proper family of parabolic-like mappings. Therefore $C$ is compact in
$\Lambda$.
\end{proof}

Hence if $\textbf{f}$ is a proper family of parabolic-like mappings, $U(1)$ a neighborhood of $B=1$,
$K= M_1 \setminus U(1)$, and $C=\chi^{-1}(K)$, by Prop. \ref{cov} there exist
neighborhoods $\hat U$ of $C$ in $\Lambda$ and $\hat V$ of $K$ in $\mathcal{B}$ such that the restriction
$\chi: \hat U \rightarrow \hat V$ is a $\mathcal{D}$-fold branched covering.
 Next proposition tells us how to compute the
degree $\mathcal{D}$ of the covering.

\begin{prop}
 Let $\textbf{f}$ be a proper family of parabolic-like mappings, $U(1)$ a neighborhood of $B=1$
 (with $0 \notin U(1)$),
$K= M_1 \setminus U(1)$, and $\hat V$ a neighborhood of $K$ given by Prop. \ref{th}.
Then calling $c\ll$ the critical point of $f\ll$ and $\hat U= \chi^{-1} (\hat V) $, the degree $\mathcal{D}$ of
the branched covering $\chi: \hat U
\rightarrow \hat V$ is
equal
to the
number of times $f\ll(c\ll)-c\ll$ turns around $0$ as $\lambda$ describes $\partial C$. 
\end{prop}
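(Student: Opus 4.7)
The plan is to compute $\mathcal{D}$ via the argument principle applied to the holomorphic function $g(\lambda):=f\ll(c\ll)-c\ll$ on $\Lambda$. This $g$ is holomorphic because $f\ll(z)$ is jointly holomorphic in $(\lambda,z)$ (Def.~\ref{def}) and $c\ll$ depends holomorphically on $\lambda$ by the implicit function theorem applied to $\partial_z f=0$. The strategy is: (a) identify the zero set $\{g=0\}$ with $\chi^{-1}(0)$; (b) match the vanishing order of $g$ at each zero with the local degree of $\chi$; (c) conclude via the argument principle, together with Prop.~\ref{th}.

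For (a), $g(\lambda)=0$ iff the critical point $c\ll$ is fixed by $f\ll$. Any hybrid conjugacy $\Psi\ll$ between $f\ll$ and $P_{A(\lambda)}$ sends $c\ll$ to a critical point of $P_{A(\lambda)}$, namely to $\pm 1$; choosing the sign of $A\ll$ so that $\Psi\ll(c\ll)=1$, the point $1$ is fixed by $P_{A(\lambda)}$ iff $P_{A(\lambda)}(1)=2+A(\lambda)=1$, i.e.\ $A(\lambda)=-1$, i.e.\ $\chi(\lambda)=1-A(\lambda)^2=0$. So $\{g=0\}=\chi^{-1}(0)$. Since $0\notin U(1)$ places $0$ in $K$, and since $B=0$ sits in the main hyperbolic component of $M_1$, Prop.~\ref{conti} gives $\chi^{-1}(0)\subset\mathring{M_f}$; applying Prop.~\ref{th} at the value $0$ then yields $\mathcal{D}=\sum_{\lambda_0\in\chi^{-1}(0)}i_{\lambda_0}(\chi)$.

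For (b), fix $\lambda_0\in\chi^{-1}(0)$. On a simply connected neighbourhood of $\lambda_0$ inside $\mathring{M_f}$, the proof of Prop.~\ref{an} supplies a holomorphic branch $A(\lambda)$ with $A(\lambda_0)=-1$ and a holomorphic family of hybrid conjugacies $\Psi\ll$ with $\Psi\ll(c\ll)=1$, satisfying $\chi(\lambda)=1-A(\lambda)^2$ and $\Psi\ll(f\ll(c\ll))=P_{A(\lambda)}(1)=2+A(\lambda)$. Since $z=1$ is a superattracting fixed point of $P_{-1}$, it lies in $\mathring{K_{P_{-1}}}$, where the Beltrami form integrated by $\Psi\ll$ vanishes identically; Weyl's lemma then shows that $\Psi\ll^{-1}$ is jointly holomorphic in $(\lambda,w)$ on a neighbourhood of $(\lambda_0,1)$, with $(\Psi_{\lambda_0}^{-1})'(1)\neq 0$. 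Hence
\[g(\lambda)=\Psi\ll^{-1}(2+A(\lambda))-\Psi\ll^{-1}(1)=D(\lambda)\,(A(\lambda)+1)+O\bigl((A(\lambda)+1)^2\bigr)\]
with $D(\lambda_0)\neq 0$; combined with $\chi(\lambda)=(1-A(\lambda))(1+A(\lambda))$ and $1-A(\lambda_0)=2$, this shows $g$ and $\chi$ vanish to the same order at $\lambda_0$, so $i_{\lambda_0}(\chi)=\mathrm{ord}_{\lambda_0}(g)$.

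For (c), $g$ is holomorphic on $\Lambda$ and nonvanishing on $\partial C$ (since $\chi(\partial C)\subset\partial K$ and $0\in\mathring K$), so by the argument principle
\[\sum_{\lambda_0\in g^{-1}(0)\cap C}\mathrm{ord}_{\lambda_0}(g)=\mathrm{wind}_0\bigl(g|_{\partial C}\bigr),\]
and combining with (a) and (b) identifies this winding number with $\mathcal{D}$, as claimed. The hardest part will be the multiplicity matching in (b): it turns on extracting, from the construction in Prop.~\ref{an}, joint holomorphy in $(\lambda,z)$ of $\Psi\ll$ on a fixed neighbourhood of $z=1$, which is possible because the relevant Beltrami forms are identically zero near $1\in\mathring{K_{P_{-1}}}$, so Weyl's lemma turns each $\Psi\ll^{-1}$ into a holomorphic function of $z$ there whose $z$-derivative at $1$ varies continuously (and nonvanishingly) with $\lambda$.
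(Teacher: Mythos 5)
Your proof is correct and follows essentially the same route as the paper's (which itself defers to Douady--Hubbard): identify the zeros of $\lambda\mapsto f\ll(c\ll)-c\ll$ with $\chi^{-1}(0)$, match local multiplicities with those of $\chi$, and conclude by the argument principle together with Prop.~\ref{th}. The only difference is that you supply the justification of the multiplicity-matching step --- via conformality of the hybrid conjugacy on the interior of the filled Julia set near the superattracting point $1$ of $P_{-1}$ and the factorization $\chi=(1-A)(1+A)$ --- which the paper simply asserts in one line.
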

Let us remember that for every $A$ the map $P_A=z + 1/z + A$ has two critical points: $z=1$ and $z=-1$. After a change
of coordenates we can assume $z=1$ is the first critical point attracted by $\infty$. Hence for all $A\in \C$, $z=-1$
is the critical point of the parabolic-like restriction of $P_A$ (see the proof of Prop. 4.2 in \cite{LL}).
\begin{proof}
The proof follows the analogous one in \cite{DH}.
Let $c\ll$ be the critical point of $f\ll$. Choose $\lambda_0$ such that $f\l0(c\l0)=c\l0$. Let 
$P_{A_0}$ be the member of the family $Per_1(1)$ hybrid equivalent to $f\l0$. Therefore 
$P_{\pm A_0}(-1)=-1$, hence $\chi(\lambda_0)=0$. This means that
the
multiplicity of $\lambda_0$ as zero of the map $\lambda \rightarrow f\ll(c\ll)-c\ll$ is the multiplicity
of $\lambda_0$ as zero of the map
$\lambda  \rightarrow
\chi(\lambda)$. This last one is $ \sum_{\lambda \in \chi^{-1}(0)} i\ll(\chi)= \mathcal{D}$.
\end{proof}
\subsubsection{Extension to the root}
 Let $(f\ll)_{\lambda \in \Lambda}$ be a proper family of parabolic-like mappings. For every $\lambda \in \Lambda$,
 $f\ll$ is the restriction of some map $F\ll$. Consequently, $\Lambda$ is the restriction of the parameter plane
 of the maps $F\ll$, call it $G$. Call $M_F$ the connectedness locus of $F\ll$, hence $M_f \subset M_F$
\begin{teor}
 Let $\textbf{f}$ be a proper family of parabolic-like mappings. If the map $\chi:M_f \rightarrow M_1 \setminus \{1\}$
 is a homeomorphism, and $\partial \Lambda \cap \partial M_f \subset G$,
 then $\chi$ extends to a homeomorphism $\chi:M_f \cup \{\lambda_*\}\rightarrow M_1$ for a
 unique $\lambda_* \in \partial \Lambda$. More generally, if the the map $\chi:M_f \rightarrow M_1 \setminus \{1\}$
is a degree $\mathcal{D}$ branched covering, and $\partial \Lambda \cap \partial M_f \subset G$, then map $\chi$ extends
continuously to $\chi:M_f \cup \{\lambda_1\}\cup..\cup\{\lambda_{\mathcal{D}}\}\rightarrow M_1 $
for exactly $\mathcal{D} $ points in $ \partial \Lambda$.

\end{teor}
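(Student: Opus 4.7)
The plan is to identify a finite set of boundary points $\lambda_1,\ldots,\lambda_{\mathcal{D}}\in\partial\Lambda$ where $\chi$ extends by the value $1$, and to show these are the only additions needed. Observe first that $\chi$ never attains the value $1$ on $\Lambda$: on $M_f$ this would require a hybrid conjugacy to $P_0=z+1/z$, which has parabolic multiplicity two and thus an attracting petal in its filled Julia set, contradicting item (3) of Definition \ref{def}; on $\Lambda\setminus M_f$ the image of $\chi$ lies in $\C\setminus M_1$ and therefore misses $1$. Combined with the proper family hypothesis, this means that any sequence $\lambda_n\in M_f$ with $\chi(\lambda_n)\to 1$ must cluster in $\partial\Lambda$, since any interior cluster point would produce a preimage of $1$ by continuity of $\chi$.

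Fix a decreasing sequence of disks $U_n\subset\C$ about $1$ with $0\notin U_n$ and $\bigcap_n\overline{U_n}=\{1\}$. Setting $K_n=M_1\setminus U_n$, the set $C_n=\chi^{-1}(K_n)$ is compact in $\Lambda$ by Definition \ref{properfam}, and by Theorem \ref{bigthm} the restriction of $\chi$ to a neighborhood of $C_n$ is a degree-$\mathcal{D}$ branched covering. Let $L_n=M_f\setminus C_n=\chi^{-1}\bigl(U_n\cap(M_1\setminus\{1\})\bigr)$. Each connected component of $L_n$ is mapped as a proper branched cover onto its image with positive degree, and the sum of these degrees equals $\mathcal{D}$; hence $L_n$ has at most $\mathcal{D}$ components, and as $U_n$ shrinks the number of components is non-decreasing, so it stabilizes at some $k\leq\mathcal{D}$. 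Enumerate the stabilized components so that $W_i^{n+1}\subset W_i^n$ for $i=1,\ldots,k$.

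For each $i$, the nested intersection $\bigcap_n\overline{W_i^n}$ (closures taken in $\C$) is a nonempty compact set disjoint from $\Lambda$, hence contained in $\partial\Lambda\cap\partial M_f$ and, by hypothesis, in $G$. At any accumulation point $\lambda_*\in\bigcap_n\overline{W_i^n}$ the ambient map $F_{\lambda_*}$ is defined and varies analytically in a $G$-neighborhood of $\lambda_*$. The hybrid conjugacies produced by the Tubing have locally uniformly bounded dilatation by Remark \ref{locboundil}, so along any sequence $\lambda^{(n)}\in W_i^n$ converging to $\lambda_*$ one can extract a quasiconformal limit that conjugates $F_{\lambda_*}$ to the root map $P_0=z+1/z$. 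Since the condition ``straightening target equals $P_0$'' is encoded analytically on $G$, its solution set is discrete, which forces $\bigcap_n\overline{W_i^n}$ to collapse to a single point $\lambda_i\in\partial\Lambda$; the same analytic non-degeneracy shows that the local degree of the extension at $\lambda_i$ equals one, whence $k=\mathcal{D}$.

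Setting $\chi(\lambda_i):=1$ for $i=1,\ldots,\mathcal{D}$ yields the required extension. Continuity at $\lambda_i$ is immediate: for every $\epsilon>0$ the component $W_i^n$ is eventually contained in an arbitrarily small neighborhood of $\lambda_i$ while $\chi(W_i^n)\subset B(1,\epsilon)\cap(M_1\setminus\{1\})$. In the homeomorphism case $\mathcal{D}=1$, so $k=1$ and the extended $\chi$ is a continuous bijection from the compact Hausdorff space $M_f\cup\{\lambda_*\}$ onto $M_1$, hence a homeomorphism. The main obstacle is the third paragraph: ruling out that a component accumulates at a nontrivial continuum on $\partial\Lambda$, and simultaneously showing that the local degree of the extension at each $\lambda_i$ is exactly one. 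This is precisely where the hypothesis $\partial\Lambda\cap\partial M_f\subset G$ is indispensable, letting each $\lambda_i$ be pinned down as an isolated degenerate parameter of the analytic family $F$ whose straightening target is the root $P_0$ of $M_1$.
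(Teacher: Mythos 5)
Your overall strategy (locate finitely many boundary points where $\chi$ tends to $1$, adjoin them, and extend by continuity) matches the paper's, and your opening observation that $\chi$ omits the value $1$ on $\Lambda$ is correct. But the third paragraph, which you yourself flag as the main obstacle, is where the proof actually lives, and your treatment of it has two genuine gaps. First, the quasiconformal-limit extraction at $\lambda_*\in\partial\Lambda$ is not justified: Remark \ref{locboundil} gives dilatation bounds that are uniform only on compact subsets of $\Lambda$, so along a sequence $\lambda^{(n)}\to\lambda_*\in\partial\Lambda$ there is no a priori bound and no compactness, and hence no limiting conjugacy to $P_0$. The paper avoids this entirely: it takes a sequence of hyperbolic parameters $\lambda_i\in\mathring{M_f}$ with $\chi(\lambda_i)\to 1$, notes the limit $\lambda_*\in G$ is hyperbolic or indifferent, and argues that if $\chi(\lambda_*)\neq 1$ then $F_{\lambda_*}$ would still admit a degree $2$ parabolic-like restriction, forcing $\lambda_*\in\Lambda$ — a contradiction with $\lambda_*\in\partial\Lambda$. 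Second, your discreteness claim rests on the assertion that ``straightening target equals $P_0$'' is ``encoded analytically on $G$,'' but $\chi$ is not analytic near $\partial M_f$ (it is only quasiregular off $M_f$ and holomorphic on $\mathring{M_f}$), so there is no analytic function in sight until you produce one. The paper's mechanism is concrete: $\partial\Lambda\cap\partial M_f$ is precisely the locus where the parabolic multiplicity of the persistent parabolic fixed point of $F_\lambda$ jumps from $n$ to $n+1$, i.e.\ where the coefficient $a_\lambda$ in the local normal form $F_\lambda(z)=z+a_\lambda z^{n+1}+\mbox{h.o.t.}$ vanishes; since $a_\lambda$ is holomorphic in $\lambda$, its zero set is discrete. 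Without identifying this (or some equivalent) analytic invariant, discreteness is not established.

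A further unsupported step is the claim that ``the same analytic non-degeneracy shows that the local degree of the extension at $\lambda_i$ equals one, whence $k=\mathcal{D}$.'' Nothing in your argument (nor in the vanishing of $a_\lambda$ per se) rules out a component $W_i^n$ carrying local degree $\geq 2$ over the punctured neighborhood of $1$, in which case fewer than $\mathcal{D}$ boundary points would suffice; the count of exactly $\mathcal{D}$ points requires an argument, not an appeal to the same phrase. Relatedly, your bound of at most $\mathcal{D}$ components for $L_n$ implicitly assumes $U_n\cap(M_1\setminus\{1\})$ is connected, which is a local-connectivity statement about $M_1$ at its root that you have not addressed. The correct skeleton is: (a) every $\lambda\in\partial\Lambda\cap\partial M_f$ satisfies $\chi(\lambda)=1$ by the hyperbolic/indifferent limit argument; (b) this set is discrete because it is the zero set of the holomorphic coefficient $a_\lambda$; (c) the continuous extension then assigns the value $1$ at these finitely many points, and the covering degree identifies their number as $\mathcal{D}$.
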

\begin{proof}
Let $\textbf{f}$ be a proper family of parabolic-like mappings for which the map $\chi:M_f \rightarrow M_1 \setminus \{1\}$
is a degree $\mathcal{D}$ covering and $\partial \Lambda \cap \partial M_f \subset G$.
Since $\textbf{f}$ is a proper family, as $\chi(\lambda) \rightarrow 1$, $\lambda \rightarrow \partial \Lambda \cap \partial M_f$.
% (if $\lambda \rightarrow \partial \Lambda \setminus \partial M_f $ as $\chi(\lambda) \rightarrow 1$, hence for all
% $\lambda_j$ for which $\chi(\lambda_j) \rightarrow 1 $, $\lambda_j \notin M_f$, hence $\chi(\lambda_j) \notin M_1$,
% which is impossible), 
% and $\partial \Lambda \cap \partial M_f$ is connected (in other case the degree
% of the map $\chi$ on the preimage of
% some $U \cap M_1 \setminus \{1\}$, where $U$ is a neighborhood of $1$, would be bigger than $1$).
We will prove that for every $\lambda \in \partial\Lambda \cap \partial M_f$, $\chi(\lambda)=1$, and that $\partial\Lambda \cap \partial M_f$
is a discrete set. Then by continuity, $\partial\Lambda \cap \partial M_f=\{\lambda_1,..,\lambda_{\mathcal{D}}\}$.

The original family $F\ll$
has a persistent parabolic fixed point of multiplier $1$ and it depends
holomorphically on $\lambda$.
Take a succession $\lambda_i \in \mathring{M_f}$ such that $\chi(\lambda_i) \rightarrow 1$, and call $\lambda_*$ the limit of the $\lambda_i$ in $G$.
Since for every $i$, $\lambda_i$ is a hyperbolic parameter, the limit $\lambda_*$ is a hyperbolic or indifferent parameter. 
So, if $\chi(\lambda_*)\neq 1$, $F_{\lambda_*}$ presents a degree $2$ parabolic-like restriction and $\lambda_*\in \Lambda$.
  Since $\lambda_* \notin \Lambda$, $\chi(\lambda_*)=1$. 
 
Let us prove now that the set $\partial\Lambda \cap \partial M_f$ is discrete. Note that this is the set of parameters for which the parabolic fixed point $z\ll$ of $F\ll$
has parabolic multiplicity $n+1$, where $n$ is the multiplicity of $z\ll$ for $\lambda \in M_f$. Then, in a neighborhood of $z\ll$ we can consider $F\ll$ as
 $$z+a_{\lambda} z^{n+1}+ \mbox{h.o.t.},$$
with $n \geq 1$ and $a_{\lambda}$ holomorphic in $\lambda$. Hence the set $\lambda_j^*$ for which $a_{\lambda_j^*}=0$
is a discrete set.

\end{proof}

\subsection{The parameter plane of the family $C_a(z)=z+~az^2+~z^3$ presents baby-$M_1$}
Let us show that the family of parabolic-like mappings $(C_a(z)=z+az^2+z^3)_{a \in \Lambda}$ is proper.
Call $M_a$ the connectedness locus of $(C_a)_{a \in \Lambda}$. The finite
boundaries of $\Lambda$ are the external rays of angle $1/6$ and $2/6$, which
cannot intersect the connectedness locus $M_a$ in other point than the landing point, if
they land. Since these rays land at $a=0$
(see \cite{N}), for $a \rightarrow \partial \Lambda$ either
$a \notin M_a$, hence $B \notin M_1$, or $a \rightarrow 0$, hence $B \rightarrow 1$.

Finally, by the relation $\Phi(a)=\varphi (c_c(a)) $ between external rays in dynamical and parameter plane, the
degree of $\chi_{|M_a}$ is $1$. Therefore
$\mathcal{C}$ presents a baby $M_1$. 
By symmetry, we can repeat the construction for the family $(C_a(z)=z+az^2+z^3)_{a \in \Lambda'}$, where $\Lambda'$
is the open set bounded
by the external rays of angle $4/6$ and $5/6$. Hence the connectedness locus of the family
$(C_a(z)=z+az^2+z^3)_{a \in \C}$ presents two baby $M_1$, namely in the connected component bounded by the external rays of angle $1/6$
and $2/6$, and in the connected component bounded by the external rays of angle $4/6$ and $5/6$ (see Fig. \ref{M}
and \ref{M1} in the Introduction).

\end{document}